\newcommand{\RR}{\mathbb{R}}    
\newcommand{\F}{\mathcal{F}}    
\newcommand{\Sph}{\mathbb{S}}    
\def\^#1{^{(#1)}}
\def\s^#1{^{\smash{(#1)}}}
\newcommand{\wideword}[1]{\quad\text{#1}\quad}
\newcommand{\wideand}{\wideword{and}}
\def\:{\colon}
\newcommand{\cupdot}{\mathbin{\mathaccent\cdot\cup}}
\newcommand{\labelstyle}[1]{\upshape(\textit{#1})}
\newcommand{\mylabel}{\labelstyle{\roman*}}
\newenvironment{myenumerate}
    {\begin{enumerate}[label=\mylabel]}
    {\end{enumerate}}
\def\itm#1{{\labelstyle{\romannumeral#1\relax}}}
\def\itmto#1#2{\itm#1\,$\Rightarrow$\,\itm#2}
\def\itmeq#1#2{\itm#1\,$\Leftrightarrow$\,\itm#2}
\def\nlspace{\nolinebreak\space}
\newcommand{\freespace}{\kern.07em} 
\newcommand{\free}{\freespace\cdot\freespace} 
\newcommand{\ul}[1]{\underline{\smash{#1}}}
\newcommand{\msays}[1]{{\footnotesize\textcolor{red}{\textbf{M:} #1}}}
\newcommand{\TODO}{\textcolor{red}{TODO}}
\newtheoremstyle{mythmstyle} 
    {\parsep}                    
    {\parsep}                    
    {\itshape}                   
    {}                           
    {\bfseries\scshape}          
    {.}                          
    {.5em}                       
    {}  
\newtheoremstyle{mydefstyle} 
    {\parsep}                    
    {\parsep}                    
    {}                   
    {}                           
    {\mdseries\scshape}          
    {.}                          
    {.5em}                       
    {}  
\numberwithin{equation}{section}
\theoremstyle{theorem}
\newtheorem{theorem}{Theorem}
\newtheorem{corollary}[theorem]{Corollary}
\newtheorem{lemma}[theorem]{Lemma}
\newtheorem{proposition}[theorem]{Pro\-po\-si\-tion}
\newtheorem{conjecture}[theorem]{Conjecture}
\newtheorem{challenge}[theorem]{Challenge}
\theoremstyle{definition}
\newtheorem{example}[theorem]{Example}
\newtheorem{remark}[theorem]{Remark}
\newtheorem{claimx}{Claim}
\crefname{theorem}{Theorem}{Theorems}
\crefname{proposition}{Proposition}{Propositions}
\crefname{lemma}{Lemma}{Lemmas}
\crefname{corollary}{Corollary}{Corollaries}
\crefname{remark}{Remark}{Remarks}
\crefname{example}{Example}{Examples}
\crefname{definition}{Definition}{Definitions}
\crefname{problem}{Problem}{Problems}
\crefname{observation}{Observation}{Observation}
\crefname{construction}{Construction}{Construction}
\DeclareMathOperator{\aff}{aff}
\DeclareMathOperator{\conv}{conv}
\DeclareMathOperator{\cone}{cone}
\DeclareMathOperator{\Span}{span}
\DeclareMathOperator*{\Argmax}{Argmax}   	
\DeclareMathOperator{\vol}{vol}  	
\DeclareMathOperator{\Int}{int}  
\DeclareMathOperator{\cl}{cl}  	
\DeclareMathOperator{\relint}{relint}
\DeclareMathOperator{\supp}{supp} 
\DeclareMathOperator{\sign}{sign} 
\DeclareMathOperator{\grad}{grad}
\let\eps=\epsilon
\let\eset=\varnothing
\def\...{...}
\renewcommand{\precdot}{\prec\mathrel{\mkern-2mu}\mathrel{\cdot}}
\newcommand{\shortStyle}{\textit}
\newcommand{\ie}{\shortStyle{i.e.,}}
\newcommand{\eg}{\shortStyle{e.g.}}
\newcommand{\wrt}{\shortStyle{w.r.t.}}
\newcommand{\cf}{\shortStyle{cf.}}
\renewcommand*{\eqref}[1]{%
  \hyperref[{#1}]{\textup{\tagform@{\ref*{#1}}}}%
}
\def\nlspace{\nolinebreak\space}
\def\nls{\nlspace}
\newcommand{\tempnewpage}{}
\newcommand{\dual}[1]{{#1}^{\circ}}
\newcommand{\fdual}[1]{{#1}^{\diamond}}
\newcommand{\sarticle}{a}
\newcommand{\sname}{special}
\newcommand{\Sname}{Special}
\newcommand{\sface}{\sname\ face}
\newcommand{\ansface}{\sarticle\ \sface}
\newcommand{\sfaces}{\sname\ faces}
\newcommand{\spoint}{\sname\ point}
\newcommand{\anspoint}{\sarticle\ \spoint}
\newcommand{\spoints}{\sname\ points}
\newcommand{\Spoints}{\Sname\ points}
\renewcommand{\mid}{:}
\newcommand{\claimstyle}[1]{\textbf{#1}}
\begin{document}


\expandafter\title
[Kalai's $3^{d}$ conjecture for locally anti-blocking polytopes]
{Kalai's $\boldsymbol{3^{d}}$ conjecture for unconditional and locally anti-blocking polytopes}
		
\author[R.\ Sanyal]{Raman Sanyal}
\author[M.\ Winter]{Martin Winter}
\address{Institute for Mathemtics, Goethe University Frankfurt, Robert-Mayer-Str.\ 10, 60325 Frankfurt am Main, Germany}
\email{sanyal@math.uni-frankfurt.de}
\address{Mathematics Institute, University of Warwick, Coventry CV4 7AL, United Kingdom}
\email{martin.h.winter@warwick.ac.uk}
	
\subjclass[2010]{51M20, 52B05, 52B11, 52B12, 52B15}
\keywords{Kalai's $3^d$ conjecture, Mahler conjecture, centrally-symmetric polytopes, unconditional polytopes, locally anti-blocking polytopes, Hanner polytopes}
		
\date{\today}
\begin{abstract}
Kalai's $3^d$ conjecture states that every centrally-symmetric $d$-poly\-tope has at least $3^d$ faces. We give short proofs for two special cases:\nls if~$P$~is unconditional (that is, invariant \wrt\ reflection in any coordinate hyper\-plane), and more generally, if $P$ is locally anti-blocking. 
In both cases we show that the~min\-imum is attained exactly for the Hanner polytopes.
\end{abstract}

\maketitle

\newcommand\Def[1]{\textbf{#1}}%




\section{Introduction}
\newcommand\PMZ{\{-1,0,+1\}}%

\nocite{ChambersPortnoy}

A convex polytope $P \subset \RR^d$ is \Def{centrally-symmetric} if $-P=P$.
Central~symmetry strongly affects the combinatorics. For example,
Figiel--Lindenstrass--Milman \cite{FLM} proved that $P$ cannot simultaneously
have few vertices and few facets. However, our understanding of extremal cases
is severely limited. This is best illustrated by the following basic conjecture
due to Kalai~\cite{Kalai3d}. We write $s(P)$ for~the~\mbox{number~of~non}-empty faces
of $P$ and we denote by $C_d = [-1,+1]^d$ the $d$-dimensional cube.

\begin{conjecture}[$3^d$ conjecture]\label{conj:kalai}
    Every $d$-dimensional centrally-symmetric polytope $P$ satisfies
    \[
        s(P) \ \ge \ s(C_d) \ = \ 3^d \, .
    \]
\end{conjecture}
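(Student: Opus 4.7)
The $3^d$-conjecture has been open since Kalai's 1989 paper, so any proposal here is speculative; I sketch the approach I would attempt and indicate where I expect it to break down. The plan is induction on the dimension $d$. The base case $d=1$ is the interval, with $s(C_1)=3$. For the inductive step, fix a generic hyperplane $H$ through the origin so that $Q \defeq P\cap H$ is a centrally-symmetric $(d-1)$-polytope. By the inductive hypothesis, $s(Q) \ge 3^{d-1}$, and the task becomes: exhibit, for each non-empty face $G$ of $Q$, at least three faces of $P$ that can be canonically associated with $G$.

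The natural candidates for these three faces are a face $F$ of $P$ with $F\cap H = G$ (hence meeting both open half-spaces), together with a face of $P\cap H^+$ that projects onto $G$ and its antipode in $P\cap H^-$. To make this rigorous, I would partition the face lattice of $P$ into three classes: faces meeting $H$ in their relative interior, faces lying entirely in one closed half-space, and faces contained in $H$ itself. Central symmetry pairs up the faces in the two half-spaces, and the combinatorial interplay between these pairs and the faces cut by $H$ should, in the optimal configuration, recover the factor $3$ per face of $Q$.

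The principal obstacle --- and the reason that this idea, together with its variants based on sweeps, pulling triangulations, or Morse-theoretic counts of critical faces, has so far failed to yield the full conjecture --- is that the bound $3\cdot s(Q)$ is saturated only for polytopes with an extremely rigid structure, conjecturally precisely the Hanner polytopes. Without an algebraic input enforcing a genuine product-like decomposition of the face lattice, the charging overcounts for generic centrally-symmetric polytopes, and one cannot exclude the possibility that many faces of $P\cap H^+$ collapse onto the same face of $Q$. I therefore expect the real difficulty to lie in an equivariant strengthening of known $f$- and flag-$f$-vector inequalities --- for instance, an equivariant generalised lower bound theorem, or equivalently an equivariant hard-Lefschetz statement on the toric variety of $P$ respecting the $\mathbb{Z}/2$-action $x\mapsto -x$. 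The present paper circumvents this obstacle by restricting to locally anti-blocking polytopes, where the orthant decomposition supplies exactly the structural information that the general centrally-symmetric case appears to lack.
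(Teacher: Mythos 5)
You have not given a proof, and none exists: the statement is Kalai's $3^d$-conjecture itself, which is open and is \emph{not} proved in this paper — the paper only establishes the special cases of unconditional and, more generally, proper locally anti-blocking polytopes. Your proposal honestly flags this, but it is worth being concrete about why your specific inductive scheme cannot be repaired. The key step you would need, $s(P)\ge 3\,s(P\cap H)$ for some central hyperplane $H$, is simply false, not merely hard. The paper's introductory remark on the inductive approach exhibits a $3$-dimensional \emph{unconditional} polytope with vertices $(\pm1,\pm1,0)$, $(\pm2,0,\pm1)$ for which $s(P)=31$ while the coordinate-hyperplane section is a hexagon with $s(P\cap H)=13$, so $s(P)<3\,s(P\cap H)$. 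Choosing $H$ generic, as you suggest, makes matters worse rather than better: already for the cube $C_3$ (the conjectured minimizer, with $s(C_3)=27$) a generic central section is a hexagon with $13$ faces, and $3\cdot 13=39>27$. So central sections can be ``too large'', and no charging of three faces of $P$ to each face of $P\cap H$ can exist in general; the failure is not confined to exotic centrally-symmetric polytopes.

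By contrast, the paper's arguments for its special cases avoid sections entirely. For locally anti-blocking polytopes it assigns to each sign vector $\sigma\in\{-1,0,+1\}^d$ a unique \emph{special point} $p_\sigma$ (the maximizer of $\sum_{i\in\supp(\sigma)}\log(\sigma_i x_i)$ on $P\cap\RR^d_\sigma$, characterized by $\inv{p}_\sigma\in N_P(p_\sigma)$) and shows each face contains at most one special point in its relative interior, yielding $3^d$ distinct faces; the orthant structure is exactly what makes this work. For unconditional polytopes there is a second, lattice-theoretic proof that does use induction, but the factor $3$ is obtained by splitting the faces of $P$ according to their interaction with an antipodal pair $F,-F$ of faces parallel to a coordinate hyperplane and using that face lattices are relatively complemented — not by bounding $s(P)$ against a hyperplane section. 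If you want to pursue your Morse/sweep picture, the honest formulation of the obstruction is the one you name at the end: without structural input (here supplied by the anti-blocking property), nothing prevents the three proposed classes from overlapping or from being too small relative to $s(P\cap H)$.
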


The conjecture is known to hold in dimensions $d \le 4$;
see~\cite{SanyalWernerZiegler}. The $3^d$ conjecture is likened to the
famous Mahler conjecture~\cite{Mahler} in convex geometry;
\cf\ \cite{Schneider-book}. We~write
$P^\circ$ for the polar dual of $P$ and $M(P) := \vol_d(P) \vol_d(P^\circ)$
for the \Def{Mahler volume}.

\begin{conjecture}[Mahler conjecture]\label{conj:kalai}
    Every $d$-dimensional centrally-symmetric polytope $P$ satisfies
    \[
        M(P) \ \ge \ M(C_d) \ = \ \frac{4^d}{d!} \, .
    \]
\end{conjecture}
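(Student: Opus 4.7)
The plan is to reduce the general case to the Hanner polytopes by induction on $d$, combined with a symmetrization procedure that does not increase the Mahler volume. The base case $d=1$ is immediate because every centrally symmetric $1$-polytope is an interval, for which $M=4$. For the inductive step, I would first invoke the affine invariance $M(AP)=M(P)$ for $A\in\GL_d$ to place $P$ in a canonical position (for example, John's position) without loss of generality.

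The core of the argument would be a Mahler-monotone symmetrization $P\mapsto P'$ that (a) enlarges the linear symmetry group of $P$, (b) satisfies $M(P')\le M(P)$, and (c), when iterated, terminates at an \emph{unconditional} polytope, \ie\ one invariant under all coordinate reflections. On such a polytope, $P^\circ$ is also unconditional, so both $\vol_d(P)$ and $\vol_d(P^\circ)$ decompose cleanly over the $2^d$ coordinate orthants. Saint-Raymond's inductive argument would then peel off coordinate directions: each peel corresponds to writing $P$ as a direct product $Q\x R$ or a direct sum $Q\oplus R$, and using $(Q\x R)^\circ=Q^\circ\oplus R^\circ$ together with $\vol(Q\oplus R)=\binom{\dim Q+\dim R}{\dim Q}^{-1}\vol(Q)\vol(R)$, induction on $d$ yields $M(P)\ge 4^d/d!$ with equality on Hanner polytopes. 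A separate equality analysis would identify Hanner polytopes as the only extremals.

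The main obstacle lies squarely in step (b) of the symmetrization. The standard candidates --- Steiner symmetrization, affine Steiner symmetrization, shadow systems, and Minkowski symmetrization --- all interact delicately with polarity: they typically preserve $\vol_d(P)$ while \emph{increasing} $\vol_d(P^\circ)$, or vice versa. Producing a symmetrization that is simultaneously volume-preserving for $P$ and volume-nonincreasing for $P^\circ$ is essentially equivalent to solving the conjecture itself. This is the fundamental reason the Mahler conjecture remains open: it is currently confirmed only in special cases, namely $d\le 2$, zonoids (Reisner), unconditional polytopes (Saint-Raymond), and, analogously to the present paper, the locally anti-blocking setting.

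An alternative worth pursuing in parallel is the symplectic reformulation of Artstein-Avidan--Karasev--Ostrover, in which a lower bound on $M(P)$ corresponds (up to the factor $d!$) to a lower bound on the Ekeland--Hofer--Zehnder capacity of the Lagrangian product $P\x P^\circ\subset\RR^{2d}$. One would try to construct a symplectic embedding obstruction showing this capacity is minimized by $C_d\x C_d^\circ$; the hard part is that this transfers the difficulty of the original statement into a symplectic problem (a case of Viterbo's conjecture) of comparable depth, so it is unlikely to yield a full proof without substantial new input.
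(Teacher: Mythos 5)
This statement is labelled a \emph{conjecture} in the paper, and for good reason: the Mahler conjecture is open in general, and the authors do not claim a proof of it anywhere --- they only recall it as the geometric analogue of the $3^d$-conjecture and cite the special cases where it is known (unconditional polytopes by Saint-Raymond, zonoids by Reisner, locally anti-blocking polytopes by Artstein--Sadovsky--Sanyal). So there is no ``paper proof'' for your attempt to be measured against, and your proposal cannot be judged correct, because it is not a proof.

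To your credit, you identify the gap yourself, but it should be stated plainly: the entire weight of your argument rests on step (b), the existence of a symmetrization that is monotone for the Mahler volume and terminates at an unconditional body, and no such symmetrization is known. The classical operations you list (Steiner, Minkowski, shadow systems) control only one of the two volumes $\vol_d(P)$, $\vol_d(P^\circ)$ at a time, and in fact they push the product in the \emph{wrong} direction for a lower bound (they are the engine behind the Blaschke--Santal\'o upper bound, not the Mahler lower bound). Reducing a general centrally symmetric body to the unconditional case is widely regarded as essentially equivalent to the conjecture itself, so your plan is circular at its core. The remaining ingredients you describe --- Saint-Raymond's inductive argument over orthants, the product/free-sum volume identities, Reisner's equality analysis, and the symplectic reformulation via Viterbo's conjecture --- are accurate summaries of the literature, but they only cover the special cases already known (plus, by different methods, dimension $3$); they do not close the gap for general $d$. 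In short: the statement remains a conjecture, and your proposal is a survey of known partial results plus an unproven reduction, not a proof.
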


Intuitively, the Mahler volume can be thought of as a
($\mathrm{GL}_d$-invariant) measure of roundness and, by the Blaschke--Santaló inequality, is maximized on the Euclidean ball. The conjecture states that cubes are the ``least round'' centrally-symmetric polytopes. 
Round polytopes require many faces and $s(P)$ should be thought of as a combinatorial measure of roundness and hence should also be minimized on the cube. 
It turns out that in both cases the conjectured minimizers are the \Def{Hanner polytopes}: every $1$-dimensional centrally-symmetric polytope is a Hanner polytope; and a polytope of dimension $d \ge 2$ is a Hanner polytope if $P$ or $P^\circ$ is linearly isomorphic to the Cartesian product of two lower-dimensional Hanner
polytopes.

A full-dimensional polytope $P \subset \RR^d$ is \Def{unconditional} (or coordinate-symmetric) if $P$ is
symmetric with respect to reflection in every coordinate hyperplane.  The
Mahler conjecture holds for unconditional polytopes by the work of
Saint-Raymond \cite{Saint-Raymond}. 
Hanner polytopes are unconditional, and
Reisner~\cite{Reisner-unconditional} showed that these are~the only minimizers among all
unconditional polytopes. 

The following main result of our paper is an analogue for the $3^d$ conjecture:

\begin{theorem}\label{cor:main_unconditional}
    If $P \subset \RR^d$ is an unconditional polytope, then $s(P) \ge 3^d$.
    Moreover, $s(P) = 3^d$ if and only if $P$ is a Hanner polytope.
\end{theorem}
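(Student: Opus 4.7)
The plan is to construct a surjection $\sigma\:\F(P) \to \{-1,0,+1\}^d$ from the non-empty face lattice of $P$ onto all $3^d$ sign vectors, which immediately gives $s(P) \ge 3^d$. For a face $F$ of $P$ and $i \in [d]$, set $\sigma(F)_i := +1$ (resp.\ $-1$) if $x_i > 0$ (resp.\ $x_i < 0$) for every $x \in \relint F$, and $\sigma(F)_i := 0$ otherwise --- that is, if $F$ either lies entirely in $\{x_i = 0\}$ or has relative interior points on both sides of that hyperplane.

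To establish surjectivity, fix $\tau \in \{-1,0,+1\}^d$ with support $S := \{i : \tau_i \ne 0\}$. The positive part $P_+ := P \cap \RR^d_{\ge 0}$ is an anti-blocking polytope by unconditionality, and so is the coordinate projection $Q := \pi_S(P_+) \subseteq \RR^S_{\ge 0}$, which is moreover full-dimensional. Starting from an interior point of $Q$ (which lies in $\RR^S_{>0}$) and greedily increasing one coordinate at a time, we reach a Pareto-optimal point $y^\star \in Q$ with $y^\star > 0$; let $G$ be the face of $Q$ containing $y^\star$ in its relative interior. Then $G \cap \RR^S_{>0} \ne \emptyset$, and the normal cone $N(G)$ also meets $\RR^S_{>0}$: anti-blocking forces $N(G) \subseteq \RR^S_{\ge 0}$, and if $N(G) \cap \RR^S_{>0}$ were empty, a short convexity argument would place $N(G)$ inside some coordinate hyperplane $\{c_i = 0\}$, which forces $e_i$ into the affine span of $G$ and hence $y^\star + \epsilon e_i \in G \subseteq Q$ for small $\epsilon > 0$, contradicting Pareto-optimality. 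Pick $c^S \in N(G) \cap \RR^S_{>0}$ and define $c \in \RR^d$ by $c_i := \tau_i c^S_i$ for $i \in S$ and $c_i := 0$ otherwise. The face $F := F^c(P)$ maximising $\<c,\cdot\>$ then satisfies $\sigma(F) = \tau$: for $i \notin S$, unconditionality forces $F$ to be symmetric under $x_i \mapsto -x_i$, so $\sigma(F)_i = 0$; and for $i \in S$, the positivity $y^\star_i > 0$ lifts through the sign flip $y \mapsto \tau \cdot y$ to witness $x_i$ attaining the strict sign $\tau_i$ in $F$, pinning $\sigma(F)_i = \tau_i$.

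For the equality statement we argue by induction on $d$. Suppose $s(P) = 3^d$, so $\sigma$ is a bijection. The $3^{d-1}$ faces with $\sigma_d = 0$ split into those lying inside $\{x_d = 0\}$ --- which are precisely the proper faces of the unconditional slice $P' := P \cap \{x_d = 0\} \subseteq \RR^{d-1}$ --- and the faces of $P$ straddling that hyperplane. Pitting this count against the induction hypothesis $s(P') \ge 3^{d-1}$ forces a rigid alternative: either $P$ carries a pair of parallel facets normal to $e_d$, so that $P$ splits as a product $P = P' \times [-\alpha,\alpha]$ for some $\alpha > 0$, or, applying the same analysis to the equally unconditional polar $P^\circ$ (which satisfies $s(P^\circ) = s(P) = 3^d$), the polar $P^\circ$ admits such a product split. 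Either way, induction identifies the relevant factor as Hanner, making $P$ a Hanner polytope. The converse direction is routine because $s$ is multiplicative under Cartesian product and invariant under polar duality, with base case $s([-1,1]) = 3$.

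The main obstacle is the equality case: translating the bijectivity of $\sigma$ into the rigid product-or-polar-product structure that characterises Hanner polytopes. The inequality itself drops out cleanly from the anti-blocking structure of $P_+$; the real work is in extracting, from the combinatorial condition $s(P) = 3^d$, a pair of parallel facets either in $P$ or in $P^\circ$.
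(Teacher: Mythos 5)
The inequality part of your proposal is sound and essentially parallels the paper's first proof: you select, for each sign vector $\tau$, a point in the positive cross-section whose normal cone also meets the positive orthant (you use a greedy Pareto point where the paper maximizes the log-barrier $\sum_{i}\log(\sigma_i x_i)$ — same idea, equivalent effect), and then transport it to the orthant of $\tau$ by a sign flip. Your map $\sigma\colon\F(P)\to\{-1,0,+1\}^d$ is well-defined and your surjectivity argument is correct, including the convexity step showing that a convex cone in $\RR^S_{\ge 0}$ avoiding $\RR^S_{>0}$ must lie in a coordinate hyperplane. So $s(P)\ge 3^d$ is in good shape.

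The equality characterization, however, has a genuine gap. Your proposed dichotomy --- \emph{either $P=P'\times[-\alpha,\alpha]$ is a prism over $P'=P\cap\{x_d=0\}$, or $P^\circ$ is such a prism} --- is simply false for minimizers. Take $P=D_2\times D_2\subset\RR^4$, where $D_2=\conv\{\pm e_1,\pm e_2\}$. This is a Hanner polytope with $s(P)=3^4$, but $P\cap\{x_4=0\}=D_2\times[-1,1]$ and $D_2\times D_2\neq (D_2\times[-1,1])\times[-\alpha,\alpha]$ for any $\alpha$ (the latter would force $D_2$ to be a rectangle). The same computation applied to $P^\circ=C_2\oplus C_2$ shows that neither is it a prism along $x_4$; and the same persists for every coordinate direction, so even permuting coordinates does not rescue the claim. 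The correct rigidity for Hanner minimizers is that $P$ or $P^\circ$ is a \emph{general product} of two lower-dimensional factors (possibly after permuting and regrouping coordinates), not a product with a segment. You also do not explain the mechanism by which ``pitting this count against the induction hypothesis $s(P')\ge 3^{d-1}$'' forces any splitting; the paper's face-inclusion statement (``the faces with $\sigma_d=0$ lying in $\{x_d=0\}$ are precisely the proper faces of $P'$'') is already not correct, since faces of $P\cap\{x_d=0\}$ need not be faces of $P$ --- see the paper's Figure 2 example, where $P\cap H$ is a hexagon with more faces than $P$ has in $H$. The paper's actual argument for the equality case is substantially more structural: it encodes the $2$-dimensional coordinate sections of a minimizer in a graph $G_P$ on $[d]$, shows this graph determines $P$ up to halfspace scaling, proves that the path $\Pi_3$ cannot occur as an induced subgraph (by exhibiting $97>3^4$ faces on $C(\Pi_3)$), and concludes $G_P$ is a cograph, which is equivalent to $P$ being Hanner. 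Your induction on a single coordinate direction cannot replicate this without a new idea for producing the product decomposition.

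Finally, your converse direction (Hanner implies $s=3^d$, via multiplicativity under products and polar-invariance) is indeed routine and correct.
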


This is a consequence of a stronger result. We need the following terminology:\nls for $J \subseteq [d]:=\{1,...,d\}$ 
we define the \Def{coordinate subspace}~\mbox{$\RR^d_J:=\Span\{e_i\mid i\in J\}$}, where~$e_i$ is the $i$-th unit basis vector.
Further let $\pi_J : \RR^d \to \RR^d_J$ be the
associated orthogonal projection (we write $\pi_i$ if $J=\{i\}$). 
A polytope $P \subset \RR^d$ is said~to~be
\Def{locally~anti-blocking} if $\pi_J(P) = P \cap \RR^d_J$ for all $J\subseteq[d]$.
Locally anti-blocking polytopes were introduced in \cite{KohlOlsenSanyal} and generalize unconditional polytopes (see \cref{fig:unconditional_anti-blocking} for further illustration of these concepts).  
\begin{figure}[h!]
    \centering
    \includegraphics[width=0.83\textwidth]{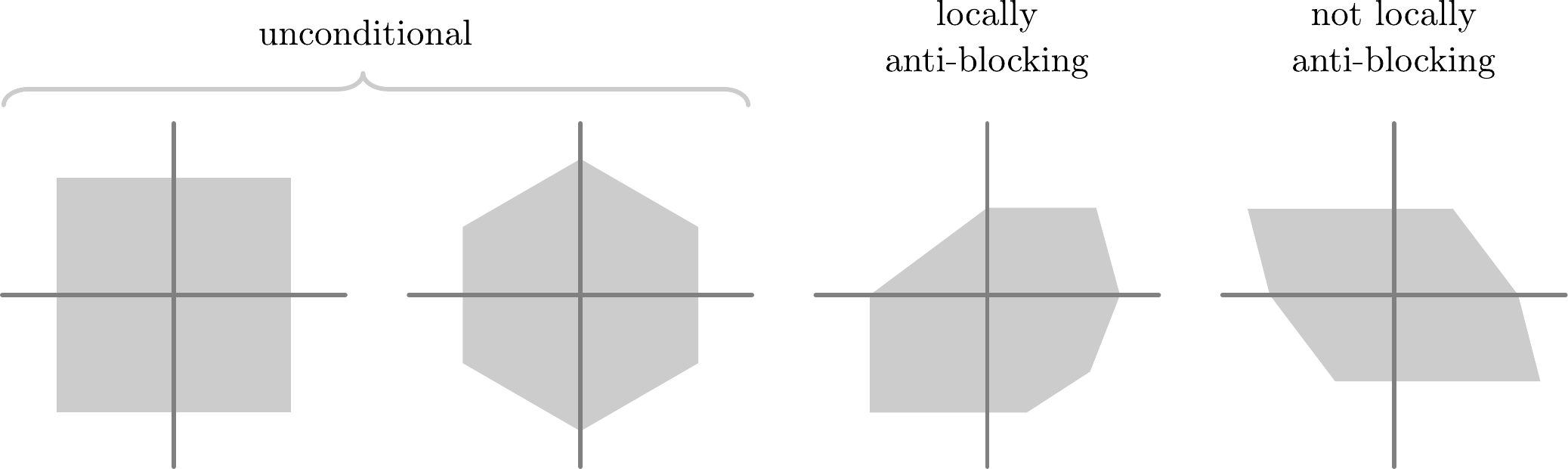}
    \caption{Visualization of two unconditional polytopes, a locally anti-blocking polytope that is not unconditional, and a polytope that is neither even though it is centrally-symmetric.}
    \label{fig:unconditional_anti-blocking}
\end{figure}

While locally anti-blocking polytopes are not necessarily centrally-symmetric,
it was shown in \cite{ArtsteinSadovskySanyal} that they still satisfy the Mahler
conjecture, including the~conjectured equality cases (at least locally in each orthant). 

We call a locally anti-blocking polytope \Def{proper} if the origin is contained in its interior.
\cref{cor:main_unconditional} is then a consequence of the following more general result:

\begin{theorem}\label{thm:main_lab}
    If $P \subset \RR^d$ is a proper locally anti-blocking polytope, then $s(P) \ge 3^d$. Moreover, $s(P) = 3^d$ if and only if $P$ is a (generalized) Hanner polytope.
\end{theorem}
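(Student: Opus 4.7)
The plan is to induct on the dimension $d$. For $d=1$, the polytope $P$ is an interval containing the origin in its interior, so $s(P)=3$ and $P$ is trivially a (generalized) Hanner polytope. For $d\geq 2$, I fix any coordinate $i\in[d]$ and consider the slice $P_0:=P\cap H_i$ in the coordinate hyperplane $H_i=\{x_i=0\}$. The local anti-blocking relations $\pi_J(P)=P\cap\RR^d_J$ for $J\subseteq[d]\setminus\{i\}$ descend to the analogous relations for $P_0$ inside $H_i\cong\RR^{d-1}$, so $P_0$ is again a proper locally anti-blocking polytope, and the inductive hypothesis gives $s(P_0)\geq 3^{d-1}$. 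The theorem then reduces to the slicing inequality
\[
s(P)\ \geq\ 3\,s(P_0),
\]
with equality forcing $P$ to decompose as $P_0\times I$ or $P_0\oplus I$ for $I=\pi_i(P)$.

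To organize the count, I would introduce a lifting map $\psi\colon\mathcal F(P)\to\mathcal F(P_0)$ on nonempty faces: set $\psi(F):=F\cap H_i$ when this intersection is nonempty, and otherwise let $\psi(F)$ be the smallest face of $P_0$ containing the projection $\pi_{\hat i}(F)$. The identity $\pi_{\hat i}(P)=P_0$ guarantees that the second case lands in $\mathcal F(P_0)$, while $F\cap H_i = F\cap P_0$ is automatically a face of $P_0$ in the first case. The slicing inequality then reduces to the key claim
\[
|\psi^{-1}(G)|\ \geq\ 3\quad\text{for every }G\in\mathcal F(P_0).
\]

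To produce three distinct faces in $\psi^{-1}(G)$, start with the fiber face $V_G:=\{x\in P:\pi_{\hat i}(x)\in G\}$. Using $\pi_{\hat i}(P)=P_0$ one verifies that $V_G=\Argmax_{x\in P}\langle(c_G,0),x\rangle$ where $c_G\in\RR^{d-1}$ is any direction defining $G$ inside $P_0$; hence $V_G$ is a face of $P$ with $V_G\cap H_i=G$. I then take $V_G^+\subseteq V_G$ to be the subface on which $x_i$ attains its maximum on $V_G$, and $V_G^-\subseteq V_G$ where $x_i$ attains its minimum. These are faces of $V_G$, hence of $P$. Whenever $V_G$ properly extends above and below $H_i$, the three faces $V_G,\,V_G^+,\,V_G^-$ are distinct and each is mapped to $G$ by $\psi$.

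The main obstacle is the case $V_G=G\subseteq H_i$, that is, when $G$ is already a face of $P$; here $V_G^\pm$ both collapse to $G$ and the two extra faces in $\psi^{-1}(G)$ must be produced differently. I would obtain them as minimal extensions of $G$ into each open halfspace $\{\pm x_i>0\}$: for a vertex $v^\pm$ of $P$ with $\pm v^\pm_i>0$ (which exists because $P$ is proper), take the smallest face of $P$ containing $G\cup\{v^\pm\}$, and use the anti-blocking property within the relevant orthant to confirm that this face intersects $H_i$ exactly in $G$. Combining both cases establishes the key claim and hence $s(P)\geq 3\,s(P_0)\geq 3^d$. For the equality characterization, $s(P)=3^d$ forces both $s(P_0)=3^{d-1}$ (so $P_0$ is a Hanner polytope by induction) and $|\psi^{-1}(G)|=3$ for every $G$; tracing the equality cases of the construction above pins down the star of $H_i$ in the face lattice of $P$ tightly enough to recognize $P$ as either $P_0\times I$ or $P_0\oplus I$, which makes $P$ Hanner by the recursive definition.
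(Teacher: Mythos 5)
The slicing inequality $s(P) \geq 3\,s(P_0)$ on which your entire proposal hinges is \emph{false}, even for unconditional polytopes. The paper explicitly addresses and refutes exactly this inductive approach in a remark: the $3$-dimensional unconditional polytope $P$ with vertices $(\pm 1, \pm 1, 0)$ and $(\pm 2, 0, \pm 1)$ has $f$-vector $(8, 14, 8)$, hence $s(P) = 31$, while its intersection with the coordinate hyperplane $H = \{x_3 = 0\}$ is a hexagon with $s(P \cap H) = 13$. Thus $s(P) = 31 < 39 = 3\,s(P \cap H)$, and the section can be ``too large.''

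This also shows your key claim $|\psi^{-1}(G)| \geq 3$ cannot hold for every $G$. Since $\psi$ is a total map from nonempty faces of $P$ to nonempty faces of $P_0$, the fibers partition $\mathcal F(P)\setminus\{\eset\}$, so $\sum_G |\psi^{-1}(G)| = s(P)$. In the example this sum is $31$ over $13$ fibers, forcing some fiber to have size $\le 2$ regardless of how $\psi$ is constructed. Concretely, the step asserting that $V_G$, $V_G^+$, $V_G^-$ all map to $G$ breaks down: $V_G^+$ (or $V_G^-$) may project under $\pi_{[d]\setminus\{i\}}$ onto a \emph{proper} subface of $G$ rather than all of $G$, so $\psi(V_G^\pm) \subsetneq G$. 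This happens precisely when the fiber face $V_G$ overhangs $G$ unevenly, which the locally anti-blocking property does not prevent. The same dimension-count problem already sinks the approach before the delicate ``case $V_G = G$'' construction is reached.

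For comparison, the paper's proof does not slice at all. For the inequality it constructs, for each sign vector $\sigma \in \{-1,0,+1\}^d$, a strictly concave function $f_\sigma(x) = \sum_{i \in \supp(\sigma)} \log(\sigma_i x_i)$ whose unique maximizer $p_\sigma$ over $P \cap \RR^d_\sigma$ is a ``special point'' with $\inv{p_\sigma} \in N_P(p_\sigma)$; a separate argument (\cref{res:point_on_both_sides}) then shows each face of $P$ contains at most one special point in its relative interior, giving $s(P) \ge 3^d$ directly. For the equality case it introduces a graph $G_P$ on $[d]$ recording whether each $2$-dimensional coordinate slice $P\cap\RR^d_{\{i,j\}}$ is axis-aligned or a diamond, proves $P$ is reconstructible from $G_P$ up to halfspace scalings, and shows minimizers correspond exactly to cographs, which in turn correspond to (generalized) Hanner polytopes.
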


Here, a \Def{generalized Hanner polytope} is any polytope that is combinatorially equivalent to a usual Hanner polytope and is locally anti-blocking (but not necessarily centrally-symmetric). 
In particular, generalized Hanner polytopes follow a recursive definition analogous to Hanner polytopes (see \cref{sec:minimizers}). 



To prove \Cref{thm:main_lab}, we show that there exist exactly $3^d$ so-called \emph{\spoints} in $P$ and that each face contains at most one of them in its relative interior. 
More precisely, for each $\sigma \in \PMZ^d$, we define a concave
function $f_\sigma$, whose maximum over $P$ is attained exactly at one of the \spoints.
While our approach focuses on the combinatorics of
locally anti-blocking polytopes, it is quite similar to the original proof of
Mahler's conjecture for unconditional
polytopes~\cite{Saint-Raymond,Reisner-unconditional} and maybe suggests a
stronger relation between the two conjectures.

We also give a second short and more combinatorial proof of the inequality~part of \cref{cor:main_unconditional} that pertains to the structure of face lattices of unconditional polytopes and that we consider to be of independent interest.
Another proof of~the inequality part for unconditional polytopes due to Chambers and Portnoy has simultaneously appeared in \cite{ChambersPortnoy}.


\begin{remark}[Zonotopes]
    Another class of polytopes for which both the Mahler conjecture and the $3^d$ conjecture is known to hold are zonotopes. 
    The Mahler~conjecture for zonotopes was verified in \cite{Reisner-zonoids}.
    The $3^d$ conjecture for zonotopes is trivially true.
    Indeed, the non-empty faces of a zonotope are
    in one-to-one correspondence to the cones of the associated arrangement of
    linear hyperplanes. Now any choice of $d$ linearly independent hyperplanes
    already gives rise to $3^d$ cones. This also shows that the cube $C_d$ is the
    unique $d$-dimensional zonotope with $3^d$ non-empty faces.
\end{remark}

\begin{remark}[Inductive approach]
    If $P \subset \RR^d$ is unconditional and $H := \{x : x_i = 0\}$ is a
    coordinate hyperplane, then $P' := P \cap H$ is unconditional in $H \cong \RR^{d-1}$.\nls
    A very appealing approach would be to show that $s(P) \ge 3
    s(P')$ and then to prove the~$3^d$ conjecture by induction. This, however,
    does not work, as sections can become ``too large'', already for unconditional polytopes. 
    The $3$-dimensional polytope with $8$ vertices given by $(\pm1,\pm1,0)$, $(\pm2,0,\pm1)$ has $f$-vector $(8,14,8)$ and hence $s(P) = 31$. The intersection with $H$ however is a hexagon with $s(P') = 13$; see \Cref{fig:counterexample}.
    \begin{figure}[h!]
    \centering
    \includegraphics[width=0.65\textwidth]{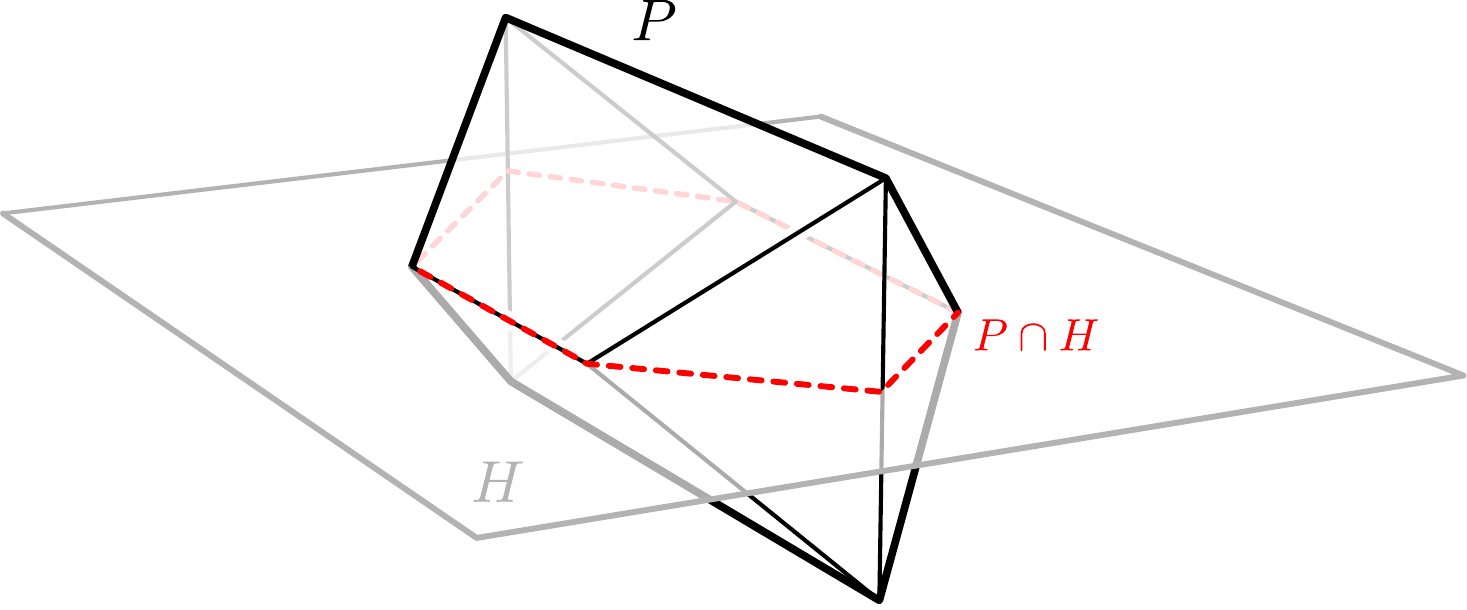}
    \caption{An unconditional polytope $P$ and a section $P\cap H$ (red) with a coordinate hyperplane that ``exposes'' more than a third of the faces.}
    \label{fig:counterexample}
\end{figure}
\end{remark}

\subsection{Overview} 
In \cref{sec:reminder_lab} we recall the fundamental properties of locally anti\-blocking polytopes that are most relevant to our approach.
The short proof of the inequality part of \cref{thm:main_lab} is given in \cref{sec:inequality}. Characterizing the minimizers requires more work and takes place in \cref{sec:minimizers}.
In \Cref{sec:uncondition}, we give the second combinatorial proof of \cref{cor:main_unconditional}.

\newcommand\inner[1]{\langle #1 \rangle}%
\newcommand\inv[1]{\widecheck{#1}}%

\tempnewpage

\section{Basic facts about locally anti-blocking polytopes}
\label{sec:reminder_lab}

A polytope $P\subset\RR^d$ is \Def{locally anti-blocking} if $\pi_J(x)\in P$ holds for all $x\in P$ and $J\subseteq[d]$.
In \cref{res:reminder_lab} below we collect some well-known elementary consequences of this definition.

Recall that $N_P(x)$ denotes the normal cone of $P$ at a point $x\in P$, \ie\ $$N_P(x):=\{v\in\RR^d\mid \<y-x,v\>\le 0\text{ for all $y\in P$}\}.$$
We also write $N_P(F)$ for the normal cone at an interior point of the face $F\subseteq P$.

\begin{samepage}
\begin{lemma}[see \cite{ArtsteinSadovskySanyal}]
\label{res:reminder_lab}
Let $P$ be a locally anti-blocking polytope
\begin{myenumerate}
    \item Sections with and projections onto coordinate subspaces result in the same polytope $P\cap\RR^d_J=\pi_J(P)=: P_J$. $P_J$  is itself locally anti-blocking.
    \item The polar dual $\dual P$ is again locally anti-blocking.
    %
    Moreover, \mbox{polar~duality~com}\-mutes with sections and~projections: $(\dual P)_J =(P_J)^\circ \cap\RR^d_J=:P_J^\circ$.
    
    \item For $p\in P_J$ holds 
    $N_P(p)\cap \RR^d_J= N_{P_J}(p)\cap\RR^d_J$.
\end{myenumerate}    
\end{lemma}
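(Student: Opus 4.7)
The approach for all three parts is to unpack the definitions and exploit the self-adjointness of the orthogonal projection $\pi_J$ together with the defining property of local anti-blocking, namely that $\pi_J(x)\in P$ for all $x\in P$ and $J\subseteq[d]$.

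First I would dispatch \itm1. The inclusion $\pi_J(P)\subseteq P\cap\RR^d_J$ is immediate from the definition of locally anti-blocking, since $\pi_J(x)\in P$ and $\pi_J(x)\in\RR^d_J$ for every $x\in P$. The reverse inclusion is trivial, as points of $\RR^d_J$ are fixed by $\pi_J$. To verify that $P_J$ is itself locally anti-blocking, I would take $x\in P_J$ and $K\subseteq[d]$; then $\pi_K(x)\in P$ by the LAB property of $P$, and $\pi_K(x)$ is supported on $J\cap K\subseteq J$, so $\pi_K(x)\in P\cap\RR^d_J=P_J$.

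For \itm2, properness of $\dual P$ (bounded, with $0$ in its interior) is standard polar duality. The LAB property of $\dual P$ follows from the self-adjointness identity $\<\pi_J(v),x\>=\<v,\pi_J(x)\>$: for $v\in\dual P$ and $x\in P$ the right-hand side is at most $1$ since $\pi_J(x)\in P$ by \itm1, hence $\pi_J(v)\in\dual P$. To establish the equality $\dual P\cap\RR^d_J=P_J^\circ\cap\RR^d_J$, I would note that for $v\in\RR^d_J$ and any $x\in\RR^d$ one has $\<v,x\>=\<v,\pi_J(x)\>$; therefore the defining inequalities $\<v,x\>\le 1$ against all $x\in P$ collapse to inequalities $\<v,y\>\le 1$ against all $y\in\pi_J(P)=P_J$, again by \itm1.

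For \itm3 the same trick finishes things off: for $p\in P_J\subseteq\RR^d_J$ and $v\in\RR^d_J$, the identity
\[
\<p-y,v\>\;=\;\<p-\pi_J(y),v\>
\]
shows that $v\in N_P(p)\cap\RR^d_J$ iff $\<p-z,v\>\ge 0$ for all $z\in\pi_J(P)=P_J$, i.e.\ iff $v\in N_{P_J}(p)\cap\RR^d_J$. I do not anticipate a substantial obstacle; the only point requiring care is that polar duals and normal cones are taken in the ambient $\RR^d$, so that $P_J^\circ$ is actually a cylinder over the polar of $P_J$ computed inside $\RR^d_J$, and only becomes directly comparable to $\dual P$ after intersecting with $\RR^d_J$.
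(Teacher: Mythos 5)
Your proof is correct, and all three parts follow cleanly from the self-adjointness of $\pi_J$ combined with the defining property $\pi_J(P)\subseteq P$. The paper itself does not prove this lemma — it simply cites \cite{ArtsteinSadovskySanyal} — so there is nothing to compare against; your direct argument from the definitions is the natural one, and your closing remark about $P_J^\circ$ being a cylinder (so that the stated equalities only make sense after intersecting with $\RR^d_J$) correctly addresses the one point where a careless reader could go wrong.
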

\end{samepage}




The following technical property of locally anti-blocking polytopes will be used repeatedly. Intuitively it states: if the interior of a face $F$ is not entirely contained on one side of a coordinate \mbox{hyperplane}, then its normal cone $N_P(F)$ must~be~contained in this hyper\-plane.

\begin{proposition}\label{res:point_on_both_sides}
    If $F\subseteq P$ is a face and $x,y\in \relint(F)$ are points with $\sign(x_i)\not=\sign(y_i)$ for some $i\in[d]$, then $v_i=0$ for all $v\in N_P(F)$.
\end{proposition}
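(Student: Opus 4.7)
\emph{Proof plan.} The plan is to combine two elementary ingredients. First, recall that $v \in N_P(F)$ means exactly that every point of $F$ maximizes $\<v,\freespace\cdot\freespace\>$ over $P$. Second, by \cref{res:reminder_lab}\itm1 applied with $J = [d]\setminus\{i\}$, for any $p \in P$ the coordinate-killing projection
\[
    \pi_{[d]\setminus\{i\}}(p) \ = \ p - p_i e_i
\]
also lies in $P$. Feeding the second into the first, for any $p \in \relint(F)$ and $v \in N_P(F)$,
\[
    \<v,p\> \ \ge\ \<v,\, p - p_i e_i\> \ =\ \<v,p\> - v_i\, p_i,
\]
so $v_i\, p_i \ge 0$. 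Consequently, if some point of $\relint(F)$ has positive $i$-th coordinate then $v_i \ge 0$, and if some point has negative $i$-th coordinate then $v_i \le 0$.

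It remains to show that the hypothesis $\sign(x_i) \neq \sign(y_i)$ supplies points of $\relint(F)$ of both types. If $x_i$ and $y_i$ are both nonzero, they already have opposite signs and we are done. Otherwise exactly one of them vanishes; say $y_i = 0$ while $x_i \neq 0$ (the symmetric case is identical). Since $y \in \relint(F)$ and the line through $x$ and $y$ lies in $\aff(F)$, for sufficiently small $\epsilon > 0$ the point $y' := y + \epsilon(y-x)$ lies in $\relint(F)$ and satisfies $y'_i = -\epsilon\, x_i$, whose sign is opposite to that of $x_i$. Applying the one-sided bounds above to $x$ and $y'$ then yields both $v_i \le 0$ and $v_i \ge 0$, hence $v_i = 0$.

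The argument is mechanical once the projection observation is in place; the only mildly delicate step is the auxiliary ``extend past the endpoint'' move, used solely to reduce the case where one of the two given points has vanishing $i$-th coordinate to the clean case where both coordinates are nonzero with opposite signs. I do not anticipate any further obstacle.
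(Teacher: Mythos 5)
Your proof is correct and follows essentially the same route as the paper: project out the $i$-th coordinate via the anti-blocking property, compare against the normal cone inequality, and use the relative-interior hypothesis to extend past an endpoint when one of $x_i,y_i$ vanishes. The only cosmetic difference is that you first isolate the clean one-sided bound $v_i p_i \ge 0$ for all $p\in\relint(F)$ and then specialize, whereas the paper goes directly to a contradiction at a single perturbed point.
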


\begin{proof}
   Assume w.l.o.g.\ $x_i\le 0 < y_i$.
    Since $x,y\in\relint(F)$, there is a $\lambda>1$~with $x':=\lambda x + (1-\lambda) y\in\relint(F)$ and $x_i'<0$.
    Suppose that~there~exists a $v\in N_P(F)$ with $v_i\not=0$, w.l.o.g.\ $v_i>0$. By the~anti-blocking property of $P$ we have $\pi_i(x')=x'-x'_i e_i\in P$, and thus
    $$\<\pi_i(x'),v\> = \<x',v\>-x'_i v_i > \<x',v\>,$$
    in contradiction to $x'$ being a maximizer of $\<\free,v\>$. Thus $v_i=0$.
\end{proof}



\tempnewpage

\section{The inequality part of \cref{thm:main_lab}}
\label{sec:inequality}

For this section let $P\subset\RR^d$ be a proper locally anti-blocking polytope. We show that $s(P)\ge 3^d$, proving the inequality part of \cref{thm:main_lab}.

For a point $p\in\RR^d$ we define $\inv{p}\in\RR^d$ by
$$\inv{p}_i:=\begin{cases}
    p_i^{-1} & \text{if $p_i\not=0$}
    \\
    0 & \text{otherwise}
\end{cases}.$$
Note that $\sign(p)=\sign(\inv p)$.
We say that a point $p\in P$ is \Def{\sname} if $\inv p\in N_P(p)$.
%
Our strategy for proving $s(P)\ge 3^d$ is to show that $P$ has exactly $3^d$ \spoints\ and that each face of $P$ can contain at most one of them in its relative interior.

For $\sigma\in\PMZ^d$ define the \Def{coordinate cone} $\RR^d_\sigma:=\{x\in\RR^d \mid \sign(x)=\sigma\}$. Those~are~relatively open cones with $\biguplus_\sigma\RR^d_\sigma=\RR^d$.

\begin{lemma}\label{res:3d_special_points}
    There are exactly $3^d$ \spoints\ in $P$.
    More precisely, for each~$\sigma\in\PMZ^d$ exists exactly one \spoint\ $p_\sigma\in \RR^d_\sigma$.
\end{lemma}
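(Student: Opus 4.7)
The plan is to realize each special point as the unique maximizer of a strictly concave function supported on the corresponding coordinate cone. Fix $\sigma \in \{-1,0,+1\}^d$ and let $J \defeq \{i : \sigma_i \neq 0\}$. First I would reduce to working in $\RR^d_J$: any special point in $\RR^d_\sigma$ lies in $P_J = P \cap \RR^d_J$, which is again a proper locally anti-blocking polytope by \cref{res:reminder_lab}~\itm1. Since $\inv p \in \RR^d_J$ whenever $p \in \RR^d_\sigma$, part \itm3 of the same lemma gives the equivalence $\inv p \in N_P(p) \Leftrightarrow \inv p \in N_{P_J}(p)$, so it suffices to produce, for each $\sigma$, a unique special point of $P_J$ in $\RR^d_\sigma$.

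Next I would introduce the strictly concave function
\[
  f_\sigma(x) \defeq \sum_{i \in J} \log(\sigma_i x_i),
\]
defined on the open cone $\RR^d_\sigma$ and extended by $-\infty$ on the boundary of the closed orthant $\overline{\RR^d_\sigma}$. A one-line computation gives $\grad f_\sigma(p) = \inv p$ (on the coordinates in $J$). I would then maximize $f_\sigma$ over the compact convex set $P_J \cap \overline{\RR^d_\sigma}$. Properness of $P$ guarantees that $P$ contains some point of the form $\epsilon \sigma$ with $\epsilon > 0$, so $\sup f_\sigma > -\infty$; the blow-up of $f_\sigma$ toward the orthant boundary forces any maximizer $p_\sigma$ to lie in the relative interior $\RR^d_\sigma$; and strict concavity forces the maximizer to be unique.

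It remains to identify this maximizer with a special point via first-order optimality. Since $p_\sigma$ lies in the relative interior $\RR^d_\sigma$ of the orthant $\overline{\RR^d_\sigma}$, the orthant constraints are inactive at $p_\sigma$, so the normal cone of $P_J \cap \overline{\RR^d_\sigma}$ at $p_\sigma$ coincides with $N_{P_J}(p_\sigma)$. Thus $\inv{p_\sigma} = \grad f_\sigma(p_\sigma) \in N_{P_J}(p_\sigma)$, and by the initial reduction this upgrades to $\inv{p_\sigma} \in N_P(p_\sigma)$, so $p_\sigma$ is special. Conversely, any special point $q \in \RR^d_\sigma$ satisfies $\grad f_\sigma(q) = \inv q \in N_{P_J}(q)$, so $q$ is a critical point of $f_\sigma$ on $P_J \cap \overline{\RR^d_\sigma}$, hence equals $p_\sigma$ by strict concavity. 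The degenerate case $\sigma = 0$ is handled separately: $\RR^d_0 = \{0\}$, $\inv 0 = 0 \in N_P(0)$, and $0 \in P$ by properness.

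The step I expect to require the most care is the normal-cone bookkeeping, namely the chain $N_{P_J \cap \overline{\RR^d_\sigma}}(p_\sigma) = N_{P_J}(p_\sigma)$ combined with $\inv{p_\sigma} \in N_{P_J}(p_\sigma) \Leftrightarrow \inv{p_\sigma} \in N_P(p_\sigma)$. Both equivalences hinge on a delicate interplay between \cref{res:reminder_lab}~\itm3 and the fact that $p_\sigma$ is interior to the orthant (so that no orthant-face normals contribute). Once this identification is secured, the existence and uniqueness part becomes a routine application of strict concavity on a compact convex domain.
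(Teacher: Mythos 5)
Your proposal is correct and follows essentially the same route as the paper: maximize the strictly concave $f_\sigma(x)=\sum_{i\in\supp(\sigma)}\log(\sigma_i x_i)$, observe $\grad f_\sigma(p)=\inv p$, and convert first-order optimality into normal-cone membership via \cref{res:reminder_lab}~\itm3. Your only deviations are expository: you maximize over the compact set $P_J\cap\overline{\RR^d_\sigma}$ and explicitly argue interiority via the $\log$ blow-up (where the paper simply asserts the maximizer exists on the relatively open $P_\sigma$), and you treat $\sigma=0$ separately where the paper handles it implicitly; both add rigor but do not change the argument.
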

\begin{proof}
Define $f_\sigma\:\RR^d_\sigma\to\RR$ by
$$f_\sigma(x)=\sum_{\mathclap{i\in\supp(\sigma)}}\,\log(\sigma_i x_i).$$
Then $f_\sigma$ is strictly concave in $\RR^d_\sigma$ and has a unique maximizer on $P_\sigma:=P\cap\RR^d_\sigma$ (where $P_\sigma\not=\eset$ because $P$ is proper).
Note also that $\nabla f_\sigma(p)=\inv{p}$ for all~$p\in\RR^d_\sigma$.\nls
Below we show that $p_\sigma\in P_\sigma$ is \anspoint\ of $P$ if and only if it is~the~unique~maximizer of $f_\sigma$ in $P_\sigma$.
Since $\biguplus_\sigma P_\sigma= P$, this proves the claim.

A point $p_\sigma\in P_\sigma$ maximizes $f_\sigma$ if and only if for all $q\in P_\sigma$ holds $\<q-p_\sigma,\nabla f_\sigma(p_\sigma)\>$ $=\<q-p_\sigma,\inv p_\sigma\>\le 0$, which is the definition of $\inv p_\sigma\in N_{P_\sigma}(p_\sigma)\cap\RR^d_\sigma=N_P(p_\sigma)\cap\RR^d_\sigma$. We used the anti-blocking property in form of \cref{res:reminder_lab} \itm3. 
%
%
%
\end{proof}

We continue to write $p_\sigma$ for the uniquely determined \spoint\ of $P$~in~$\RR^d_\sigma$.\nls
The inequality part of \cref{thm:main_lab} is now an immediate consequence of the following.

\begin{lemma}\label{res:one_per_face}
    A face of $P$ contains at most one special point in its relative interior.
\end{lemma}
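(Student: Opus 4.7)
The plan is a proof by contradiction using the two main tools already available: the uniqueness statement of \cref{res:3d_special_points} (one special point per coordinate cone $\RR^d_\sigma$) and the sign-flip principle \cref{res:point_on_both_sides}. Suppose toward a contradiction that a face $F \subseteq P$ contains two distinct special points $p \neq q$ in its relative interior, and let $\sigma := \sign(p)$ and $\tau := \sign(q)$.

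First I would dispose of the case $\sigma = \tau$: both $p$ and $q$ would then lie in the same coordinate cone $\RR^d_\sigma$, contradicting the uniqueness assertion of \cref{res:3d_special_points}. Hence we may assume $\sigma \neq \tau$ and pick some coordinate $i \in [d]$ with $\sigma_i \neq \tau_i$. In particular $\sign(p_i) \neq \sign(q_i)$, so the hypothesis of \cref{res:point_on_both_sides} is satisfied for $p, q \in \relint(F)$, yielding $v_i = 0$ for every $v \in N_P(F)$.

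Now I would exploit that a special point $p$ satisfies $\inv{p} \in N_P(p)$ by definition, and that $p \in \relint(F)$ forces $N_P(p) = N_P(F)$. Therefore $\inv{p} \in N_P(F)$, so $\inv{p}_i = 0$, which by definition of $\inv{\phantom{p}}$ means $p_i = 0$ and hence $\sigma_i = 0$. The identical argument applied to $q$ gives $\tau_i = 0$, contradicting $\sigma_i \neq \tau_i$.

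There is no real obstacle here: the work has all been front-loaded into \cref{res:reminder_lab}\,\itm3 (which was used to identify the relevant normal cones with sections) and into \cref{res:point_on_both_sides}. The only point to handle carefully is making sure we invoke the equality $N_P(p) = N_P(F)$ on the basis that $p$ lies in the \emph{relative} interior of $F$, and that $\inv{p}_i = 0 \Leftrightarrow p_i = 0$, both of which are immediate from the definitions.
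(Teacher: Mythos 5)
Your proof is correct and follows essentially the same route as the paper's: both reduce the matter to \cref{res:point_on_both_sides} applied coordinatewise to derive $\sigma_i = 0 = \tau_i$ from a hypothetical sign discrepancy, and both fall back on the uniqueness in \cref{res:3d_special_points} to rule out two distinct special points sharing a sign vector. The only difference is cosmetic: you split explicitly into the cases $\sigma=\tau$ and $\sigma\neq\tau$, whereas the paper shows $\sigma=\tau$ directly and leaves the appeal to uniqueness implicit.
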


\begin{proof}
    Suppose $F\subseteq P$ is a face of $P$ with $p_\sigma,p_\tau\in\relint(F)$. 
    Then $\inv p_\sigma,\inv p_\tau\in N_P(F)$.
    If $\sign((p_\sigma)_i)=\sigma_i\not=\tau_i=\sign((p_\tau)_i)$ for some $i\in[d]$, then by \cref{res:point_on_both_sides}~$(\inv p_\sigma)_i=(\inv p_\tau)_i=0$ $\Rightarrow$ $\sigma_i=0=\tau_i$, which is a contradiction. Thus $\sigma=\tau$.
    %
\end{proof}

We shall call a face \Def{\sname} if it contains \anspoint\ in its relative interior. By \cref{res:one_per_face}, there are exactly $3^d$ \sfaces\ which we denote \mbox{by~$F_\sigma,\sigma\in\PMZ^d$}, where $p_\sigma\in\relint(F_\sigma)$ and $\inv p_\sigma\in N_P(F_\sigma)$.

\tempnewpage

\section{The minimizer part of \cref{thm:main_lab}}
\label{sec:minimizers}

For this section we assume that $P\subset\RR^d$ is a proper locally anti-blocking polytope with exactly $s(P)=3^d$ non-empty faces.
Then each face of $P$ is \ansface\ $F_\sigma$ for some $\sigma\in\PMZ^d$.
Our goal is to show that $P$ is a generalized Hanner~polytope.
Recall that \emph{generalized Hanner polytopes} are locally anti-blocking realizations of~Hanner polytopes, and can be defined recursively as follows: for~$d=1$ a generalized Hanner polytope is a line segment that contains the origin in its interior. 
For $d\ge 2$ it is either the Cartesian product or the direct sum of two generalized Hanner polytopes of lower dimension, where by the direct sum of $P_1$ and $P_2$ we mean
$$P_1\oplus P_2:=\conv\{P_1\times\{0\}\cup \{0\}\times P_2\}.$$


For $J\subseteq[d]$ we write $P_J:=P\cap\RR^d_J$, which is itself proper locally anti-blocking~of dimension $|J|$. In fact, it is a minimizer:

\begin{proposition}\label{res:3_to_J}
    For $J\subseteq[d]$ we have $s(P_J)=3^{|J|}$.
\end{proposition}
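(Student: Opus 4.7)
My plan is to reduce the statement to a single-coordinate version via induction on $|J^c|$ and then prove that version directly. The base case $J=[d]$ is trivial; in the inductive step, I pick $i \in J^c$ and set $J':=J\cup\{i\}$. By the inductive hypothesis, $P_{J'}$ is a proper locally anti-blocking polytope in $\RR^{J'}$ with $s(P_{J'})=3^{|J'|}$, so applying the single-coordinate case inside $\RR^{J'}$ yields $s(P_J)=s(P_{J'}\cap H_i)=3^{|J|}$, where $H_i=\{x_i=0\}$.

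The work is concentrated in the single-coordinate case: if $P\subset\RR^d$ is proper locally anti-blocking with $s(P)=3^d$, then $s(Q)=3^{d-1}$ for $Q:=P\cap H_i$. The lower bound $s(Q)\ge 3^{d-1}$ follows from \cref{res:3d_special_points,res:one_per_face} applied to $Q$, so it suffices to show that every face of $Q$ is a special face of $Q$ (there being at most $3^{d-1}$ such, indexed by $\PMZ^{d-1}$).

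For this, I take a non-empty face $G=\{y\in Q:\<w,y\>=\max\}$ of $Q$ and lift it to a face of $P$: by local anti-blocking, $\max_P\<w,\cdot\>=\max_Q\<w,\cdot\>$ (any $P$-maximizer can be projected via $\pi_J$, which preserves $\<w,\cdot\>$ since $w\in H_i$ and stays inside $P$), so $G=F\cap H_i$ where $F$ is the face of $P$ maximizing $w$. Since $s(P)=3^d$, $F=F_\sigma$ for some $\sigma\in\PMZ^d$. When $\sigma_i=0$, we have $p_\sigma\in G$ and $\inv p_\sigma\in N_Q(G)$ by \cref{res:reminder_lab}, identifying $G$ as the special face of $Q$ for sign pattern $\sigma$.

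The main obstacle is the remaining case $\sigma_i\ne 0$ (WLOG $\sigma_i=+1$): here the crux is to show $F_\sigma\subseteq\{x_i\ge 0\}$. Otherwise some $y\in F_\sigma$ has $y_i<0$, and combined with $p_\sigma\in\relint(F_\sigma)$ satisfying $(p_\sigma)_i>0$, the convex combinations $(1-t)p_\sigma+ty$ for $t\in(0,1)$ lie in $\relint(F_\sigma)$ and attain both signs in the $i$-th coordinate, so \cref{res:point_on_both_sides} would force $v_i=0$ for all $v\in N_P(F_\sigma)$---contradicting $\inv p_\sigma\in N_P(F_\sigma)$ with $(\inv p_\sigma)_i=1/(p_\sigma)_i>0$. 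Given this, $H_i$ supports $F_\sigma$, so $G=F_\sigma\cap H_i$ is a proper face of $F_\sigma$ (hence of $P$), equal to $F_\tau$ for some $\tau\in\PMZ^d$ with $\tau_i=0$, and thus the corresponding special face of $Q$.
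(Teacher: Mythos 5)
Your proof is correct, but it takes a more roundabout route than the paper's. The paper proves the bound for a general $J$ in one step: it maps each face $F$ of $P_J$ to the \emph{unique minimal} face $F_\sigma$ of $P$ satisfying $F_\sigma\cap\RR^d_J=F$ (equivalently, the smallest face of $P$ containing $F$), and shows $\supp(\sigma)\subseteq J$; injectivity is then automatic, giving $s(P_J)\le 3^{|J|}$ directly. You instead reduce to a one-coordinate section by induction on $|J^c|$ and lift a face $G$ of $Q=P\cap H_i$ to \emph{some} face $F_\sigma=\argmax_P\langle w,\cdot\rangle$, which need not be the minimal one containing $G$. That is precisely what forces your case split on $\sigma_i$: when $\sigma_i\ne 0$ you then have to do extra work (via \cref{res:point_on_both_sides}) to show $G=F_\sigma\cap H_i$ is itself a face of $P$, i.e.\ you rediscover the minimal lift. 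Had you lifted to the smallest face of $P$ containing $G$, the $\sigma_i\ne 0$ case could not occur at all: $\relint(G)\subseteq\relint(F_\sigma)$ would contain a point with vanishing $i$-th coordinate, so \cref{res:point_on_both_sides} together with $(\inv p_\sigma)_i\ne 0$ would immediately give a contradiction. Both proofs use the same two ingredients --- \cref{res:reminder_lab}\,\itm3 to transfer normal vectors between $P$ and $P_J$, and \cref{res:point_on_both_sides} to control signs --- but your version makes the role of \cref{res:point_on_both_sides} more explicit, at the cost of the induction and the case analysis. Two small points you should make explicit: in the $\sigma_i=0$ case you need $p_\sigma\in\relint(G)$, not merely $p_\sigma\in G$ (this holds because $\relint(F_\sigma)\cap H_i=\relint(F_\sigma\cap H_i)$ once the intersection is nonempty), and in both cases you should invoke \cref{res:reminder_lab}\,\itm3 to pass from $\inv p_\tau\in N_P(G)$ to $\inv p_\tau\in N_Q(G)$ so that $G$ is actually a \emph{special} face of $Q$.
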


\begin{proof}
    We have $s(P_J)\ge 3^{|J|}$ by \cref{thm:main_lab}, and it suffices to show $s(P_J)\le 3^{|J|}$.

    For each face $F$ of $P_J$ there exists a unique minimal face $F_\sigma$ of $P$ so that~$F=F_\sigma\cap\RR^d_J$.
    We have $\inv p_\sigma\in N_P(F_\sigma)\cap\RR^d_\sigma=N_{P_J}(F)\cap\RR^d_\sigma$ by \cref{res:reminder_lab} \itm3, and hence $\supp(\sigma)\subseteq J$. 
    There are exactly $3^{|J|}$ sign-vectors with support in $J$.
    %
\end{proof}

Applying \cref{res:3_to_J} to the case $|J|=2$, we find $s(P_J)=9$ and so $P_J$ must be a quadrilateral. 
Moreover, in order to be locally anti-blocking it must be either a Cartesian product $[a_1,b_1]\times[a_2,b_2]$ (we say, it is \Def{axis-aligned}) or a direct sum $[a_1,b_1]\oplus[a_2,b_2]$~(we say, it is a \Def{diamond}); see \cref{fig:4_gon_cases}.


\begin{figure}[h!]
    \centering
    \includegraphics[width=0.44\textwidth]{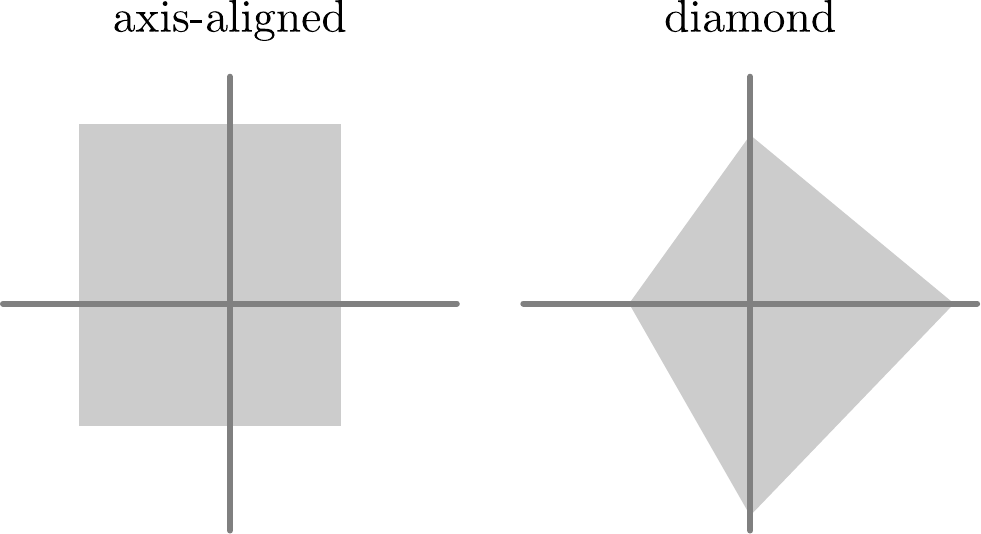}
    \caption{An axis-aligned quadrangle (left) and a diamond (right).}
    \label{fig:4_gon_cases}
\end{figure}

Based on this observation we build the graph $G_P$ on the vertex set $[d]$ with~$J=\{i,j\}$ forming an edge if and only if $P_J$ is axis-aligned. 
Some examples of this~construction are shown in \cref{fig:GP_examples}.
We will show that the graph $G_P$ captures the full combinatorial information about the minimizer $P$, and allows us to identify it as a generalized Hanner polytope.


\begin{figure}
    \centering
    \includegraphics[width=0.9\textwidth]{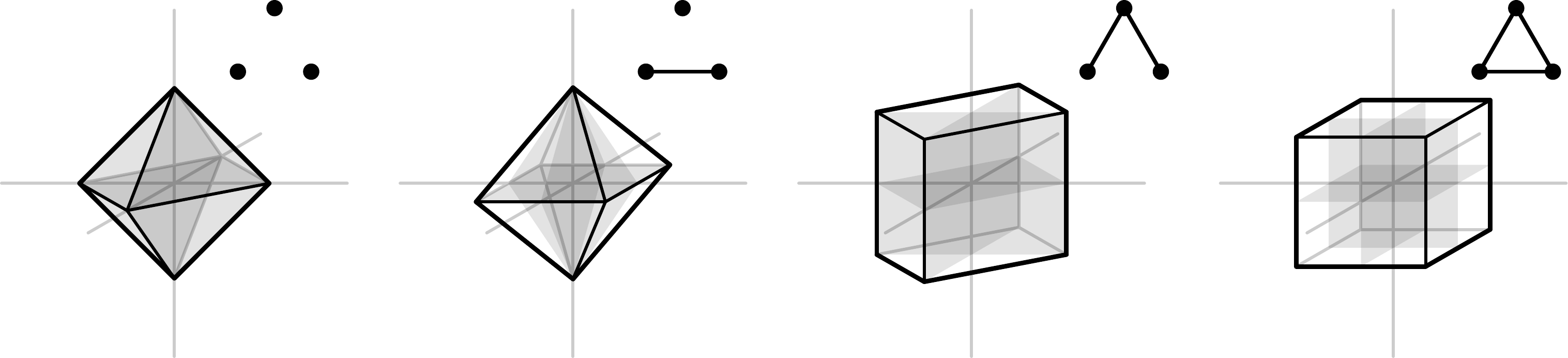}
    \caption{The 3-dimensional minimizers $P$ and their associa\-ted graphs $G_P$. 
    The shaded areas visualize intersections of $P$ with the 2-dimensio\-nal coordinate subspaces.
    It holds the more general observation that for an axis-aligned cube the graph $G_P$ is complete, and for an ``axis-aligned'' crosspolytope the graph $G_P$ is edge-less.}
    \label{fig:GP_examples}
\end{figure}

We collect some properties of $G_P$:

\begin{proposition}\label{res:GP_facts}\quad
\begin{myenumerate}
    \item $G_{\dual P}$ is the complement graph of $G_P$.
    \item $G_{P_J}$ is the subgraph of $G_P$ induced on $J\subseteq[d]$.
    \item $G_{P_1\oplus P_2}$ is the disjoint union $G_{P_1}\uplus G_{P_2}$.
\end{myenumerate}    
\end{proposition}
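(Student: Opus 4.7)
The plan is to analyze each part by reducing to the two-dimensional coordinate sections $P_{\{i,j\}}$, which by \cref{res:3_to_J} (applied appropriately) are forced to be proper locally anti-blocking quadrilaterals, hence either axis-aligned or diamonds.

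For part \itm1, I would apply \cref{res:reminder_lab}\itm2 to rewrite $(\dual P)_{\{i,j\}}=\dual P\cap\RR^d_{\{i,j\}}=P_{\{i,j\}}^{\circ}\cap\RR^d_{\{i,j\}}$, which is the polar dual of the quadrilateral $P_{\{i,j\}}$ inside the 2-dimensional coordinate subspace $\RR^d_{\{i,j\}}$. Since the polar dual of an axis-aligned rectangle containing $0$ in its interior is a diamond and vice versa, $P_{\{i,j\}}$ is axis-aligned iff $(\dual P)_{\{i,j\}}$ is a diamond, which is exactly the statement that $\{i,j\}$ is an edge of $G_P$ iff it is a non-edge of $G_{\dual P}$.

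Part \itm2 is almost a tautology: for $\{i,j\}\subseteq J$ we have
\[
    (P_J)_{\{i,j\}} \;=\; P\cap\RR^d_J\cap\RR^d_{\{i,j\}} \;=\; P\cap\RR^d_{\{i,j\}} \;=\; P_{\{i,j\}},
\]
so $\{i,j\}$ is an edge of $G_{P_J}$ iff it is an edge of $G_P$.

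Part \itm3 I would settle by a direct case analysis on the two coordinates. Write $P_1\subset\RR^{d_1}$, $P_2\subset\RR^{d_2}$, and identify $[d]=[d_1]\cupdot([d_1+d_2]\setminus[d_1])$. If $i,j$ both belong to the $P_1$-block, then $(P_1\oplus P_2)\cap\RR^d_{\{i,j\}}=P_1\cap\RR^{d_1}_{\{i,j\}}=(P_1)_{\{i,j\}}$ (and symmetrically for the $P_2$-block), so the induced subgraphs on each block agree with $G_{P_1}$ and $G_{P_2}$. The main point is the mixed case: for $i$ in the first block and $j$ in the second, I expect $(P_1\oplus P_2)_{\{i,j\}}$ to equal $(P_1)_i\oplus(P_2)_j$ (a diamond in the two chosen coordinates). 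This should follow directly from the definition of the free sum as the convex hull of $P_1\times\{0\}$ and $\{0\}\times P_2$, together with the fact that $(P_1)_i$ and $(P_2)_j$ are segments containing $0$ in their relative interior. Hence no edge of $G_{P_1\oplus P_2}$ crosses the bipartition, and $G_{P_1\oplus P_2}=G_{P_1}\uplus G_{P_2}$.

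The only mildly nontrivial step is the identification $(P_1\oplus P_2)_{\{i,j\}}=(P_1)_i\oplus(P_2)_j$ in the mixed case; but this is a short consequence of the convex-hull definition of the free sum and does not require any of the anti-blocking machinery.
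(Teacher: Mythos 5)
Your argument is correct and tracks the paper's own proof essentially step by step: part \itm1 via \cref{res:reminder_lab}\,\itm2 and polar duality of axis-aligned quadrilaterals and diamonds, part \itm2 from $(P_J)_K = P_K$ for $K\subseteq J$, and part \itm3 by block-wise case analysis with the mixed pair producing a diamond. The paper states the mixed case of \itm3 without the explicit identification $(P_1\oplus P_2)_{\{i,j\}}=(P_1)_i\oplus(P_2)_j$, but your added detail is a straightforward unpacking of the free-sum definition and not a divergence in method.
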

\begin{proof}
\emph{\itm1:}
This follows from $\dual P\cap\RR^d_J=P_J^\circ\cap\RR^d_J$ (using \cref{res:reminder_lab}~\itm2) and that the polar dual of an axis-aligned quadrangle is a diamond and vice versa.

\emph{\itm2:} This follows from $P_J\cap\RR^d_K=P_K$ whenever $K\subseteq J$.

\emph{\itm3:}
Consider $P_1\subset\RR^{d_1}\times\{0\}$ and $P_2\subset\{0\}\times \RR^{d_2}$. 
If we have $J\subset\{0,...,d_1\}$ then $P_J=P_1\cap\RR^d_J$, and if $J\subset\{d_1+1,...,d_1+d_2\}$ then $P_J=P_2\cap\RR^d_J$.
In all other cases with $|J|=2$, $P_J$ is a diamond.
\end{proof}

As seen in \cref{fig:4_gon_cases}, a minimizer is not necessarily a Hanner polytope in its~unconditional form, but might be a generalized Hanner polytope. 
It is convenient~to consider polytopes only up to a particular equivalence relation which identifies generalized Hanner polytopes with the usual Hanner polytopes.


\subsection{Halfspace scaling}
\label{sec:halfspace_scaling}
Given $i\in[d]$ and $\alpha_+,\alpha_->0$, a \Def{halfspace scaling} is a piecewise linear map that sends $x\in\RR^d$ onto $y\in\RR^d$ with
$$y_i = \begin{cases}
    \alpha_+ x_i & \text{if $x_i>0$}
    \\
    \alpha_- x_i & \text{if $x_i\le 0$}
\end{cases}.$$
Halfspace scalings map proper locally anti-blocking polytopes onto proper locally anti-blocking polytopes and preserves their combinatorial type.
For this section we say that two polytopes are \Def{halfspace-scaling equivalent} if they are related by a sequence of halfspace scalings.

Note that a polytope is a generalized Hanner polytope if and only if it is~halfspace-scaling equivalent to a Hanner polytope (see also our definition of a \emph{normal form}~below).




\subsection{Reconstructing $\boldsymbol{P}$ from $\boldsymbol{G_P}$.}

Our next goal is to reduce the classification of minimizers to a combinatorial problem. 
For that purpose, we show that $P$ can be reconstructed from $G_P$ up to equivalence.

For our discussion below it is sufficient and convenient to restrict to the positive orthant $\RR^d_+:=\{x\in\RR^d\mid x_i\ge 0\}$.
We introduce the following useful notation:\nls $\sigma_+:=(+1,...,+1)$, $F_+:=F_{\sigma_+}$ and $p_+:=p_{\sigma_+}$.
We also use the notation $F_+(P)$ and $p_+(P)$ to emphasize the implicit argument, which then allows us to write, for example, $F_+(P_J)$ or $F_+(\dual P)$ to refer to different setting if needed.


The following technical properties of $F_+$ will be used extensively.

\iftrue 

\begin{proposition}
\label{res:technical_Fplus}
\label{res:technical_Fplus_properties}
\label{res:Fplus_properties}
For a face $F\subseteq P$ the following are equivalent:
\begin{myenumerate}
    \item $F=F_+$.
    \item $\relint(F)\subset\RR^d_+$.
    \item $\fdual F=F_+(\dual P)$, where $\fdual F\subseteq \dual P$ is the dual face to $F$.
\end{myenumerate}
\end{proposition}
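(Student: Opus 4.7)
The three equivalences will be established by proving (i) $\Leftrightarrow$ (ii) directly and bootstrapping to (i) $\Leftrightarrow$ (iii) via polar duality. Both directions use the minimizer hypothesis $s(P) = 3^d$ in an essential way: it ensures that every face of $P$ is one of the $3^d$ special faces $F_\tau$, and (by \cref{res:reminder_lab}) the same holds for $\dual P$.

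For (i) $\Rightarrow$ (ii), I apply \cref{res:point_on_both_sides} to the pair $(p_+, y)$ with $y \in \relint(F_+)$ arbitrary: since $\inv{p_+} \in N_P(F_+)$ has all strictly positive coordinates, any coordinate $y_i \le 0$ would differ in sign from $(p_+)_i > 0$ and force every $v \in N_P(F_+)$ to satisfy $v_i = 0$, a contradiction. The reverse (ii) $\Rightarrow$ (i) is immediate from the minimizer hypothesis: writing $F = F_\tau$, the point $p_\tau \in \relint(F) \subset \RR^d_+$ has sign $\tau$ with all positive coordinates, so $\tau = \sigma_+$ and $F = F_+$.

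For (i) $\Rightarrow$ (iii), applying the just-proven (ii) $\Rightarrow$ (i) on the $\dual P$ side reduces the goal to showing $\relint(\fdual{F_+}) \subset \RR^d_+$. For $y \in \relint(\fdual{F_+})$ and any $x^* \in \relint(F_+)$ (necessarily with $x^*_i > 0$), the inclusion $x^* - t e_i \in P$ for $t \in [0, x^*_i]$ (from local anti-blocking applied to $x^*$ and convexity with $\pi_i(x^*)$) together with $\<x^* - t e_i, y\> \le 1 = \<x^*, y\>$ forces $y_i \ge 0$. Strict positivity is then obtained via a dichotomy using \cref{res:point_on_both_sides} applied to the locally anti-blocking polytope $\dual P$: if two relative-interior points of $\fdual{F_+}$ disagreed in the sign of the $i$-th coordinate, then $N_{\dual P}(\fdual{F_+}) = \cone(F_+)$ would lie in $\{v_i=0\}$, forcing $x^*_i = 0$; hence either \emph{all} of $\relint(\fdual{F_+})$ has $y_i = 0$ (so $\fdual{F_+} \subset \{y_i = 0\}$) or \emph{all} have $y_i > 0$. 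The first alternative is excluded by the explicit witness $q := \inv{p_+}/d$, which lies in $\fdual{F_+}$ (since $\<x, q\> = 1$ for $x \in F_+$, using $\inv{p_+} \in N_P(F_+)$ together with the identity $\<p_+, \inv{p_+}\> = d$) and has all coordinates strictly positive. The converse implication (iii) $\Rightarrow$ (i) is obtained by applying (i) $\Rightarrow$ (iii) to the polytope $\dual P$ and the face $\fdual F$, using $\fdual{\fdual F} = F$ and $F_+(\dual{\dual P}) = F_+(P)$.

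\textbf{Main obstacle.} The technical heart of the argument is excluding the degenerate configuration $\fdual{F_+} \subset \{y_i = 0\}$ via the reciprocation witness $q = \inv{p_+}/d$. This step relies on the minimizer hypothesis for $\dual P$ crucially, both to make (i) $\Leftrightarrow$ (ii) available on the $\dual P$ side of the duality and to permit the clean sign-uniformity application of \cref{res:point_on_both_sides} inside $\fdual{F_+}$.
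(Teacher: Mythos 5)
Your proof is correct, and its (i) $\Leftrightarrow$ (ii) part coincides with the paper's argument. The interesting divergence is in the step involving (iii). The paper proves (ii) $\Rightarrow$ (iii) in one line: write $\fdual F = F_\sigma(\dual P)$ and use that $\inv p_\sigma(\dual P)$ lies in $\relint\bigl(N_{\dual P}(\fdual F)\bigr) = \cone(\relint F) \subseteq \RR^d_+$, forcing $\sigma = \sigma_+$. You instead prove (i) $\Rightarrow$ (iii) by directly establishing $\relint(\fdual F_+) \subset \RR^d_+$ from scratch: first the weak inequality $y_i \ge 0$ via local anti-blocking ($x^* - te_i \in P$ paired against $\<x^*,y\>=1$), then a sign-uniformity dichotomy via \cref{res:point_on_both_sides} applied to $\dual P$ and the inclusion $\cone(F_+) = N_{\dual P}(\fdual F_+)$, and finally the explicit witness $q = \inv p_+/d \in \fdual F_+$ (correctly certified by $\<p_+,\inv p_+\> = d$) to rule out the degenerate alternative. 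This is longer, but it has the virtue of being fully explicit and of not relying on the containment $\inv p_\sigma(\dual P) \in \relint\bigl(N_{\dual P}(\fdual F)\bigr)$, which the paper asserts but does not justify separately (only $\inv p_\sigma \in N_P(F_\sigma)$, without \emph{relative interior}, is established in \cref{res:one_per_face}). In that sense your route is more self-contained; the paper's is more concise. One small remark: for the dichotomy step you do not need the full equality $N_{\dual P}(\fdual F_+) = \cone(F_+)$; the easy inclusion $F_+ \subseteq N_{\dual P}(\fdual F_+)$ (every $x \in F_+$ maximizes $\<x,\free\>$ on $\dual P$ at $\fdual F_+$) already yields the needed contradiction with $x^*_i > 0$.
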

\begin{proof}
    \emph{\itmto 1 2:}
    Since $p_+\in \relint(F_+)$ and $\inv p_+\in N_P(F_+)$ are both in $\RR^d_+$, from \cref{res:point_on_both_sides} follows that all other inner points of $F_+$ must be in $\RR^d_+$ as well.

    \emph{\itmto 2 1:}
    Since $F=F_\sigma$ for some $\sigma\in\PMZ^d$, from $p_\sigma\in\relint(F_\sigma)\subset\RR^d_+$ follows $\sigma=\sigma_+$ and $F_\sigma=F_+$.
    
    \emph{\itmto 2 3:}
    We have $\fdual F=F_\sigma(\dual P)$ for some $\sigma\in\PMZ^d$. Then $\inv p_\sigma(\dual P)\in \relint(N_{\dual P}(\fdual F))=\cone(\relint(F))\subseteq\RR^d_+$. Hence $\sigma=\sigma_+$ and $\fdual F=F_+(\dual P)$.
    
    \emph{\itmto 3 1:} Swap $P$ and $\dual P$ (and $F$ and $\fdual F$) and apply \itmto 1 3.
\end{proof}

\else

\begin{lemma} 
\label{res:technical_Fplus}
\label{res:technical_Fplus_properties}
\label{res:Fplus_properties}
For a face $F\subseteq P$ the following are equivalent:
\begin{myenumerate}
    \item $F=F_+$.
    \item $\relint(F)\subset\RR^d_+$.
    \item $\relint(N_P(F))\subseteq\RR^d_+$.
    \item $\fdual F=F_+(\dual P)$, where $\fdual F\subseteq \dual P$ is the dual face to $F$.
\end{myenumerate}
\end{lemma}
\begin{proof}
    We have $F=F_\sigma$ for some $\sigma\in\PMZ^d$.

    \emph{\itm1\,$\Rightarrow$\,\itm2:}
    Since $p_+\in \relint(F_+)$ and $\inv p_+\in N_P(F_+)$ are both in $\RR^d_+$, by \cref{res:point_on_both_sides} all other inner points of $F_+$ must be in $\RR^d_+$ as well.

    \emph{\itm2, \itm3\,$\Rightarrow$\,\itm1:}
    From either $p_\sigma\in\relint(F_\sigma)\subset\RR^d_\sigma$, or $\inv p_\sigma\in N_P(F_\sigma)\subseteq\RR^d_\sigma$, follows $\sigma=\sigma_+$ and $F_\sigma=F_+$.
    
    \emph{\itm1\,$\Leftrightarrow$\,\itm4:}
    suppose $F=F_+$. Since $\fdual F=F_\sigma(\dual P)$ for some $\sigma\in\PMZ^d$, we have $\inv p_\sigma(\dual P)\in \relint(N_{\dual P}(\fdual F))=\cone(\relint(F))\subseteq\RR^d_+$. This $\sigma=\sigma_+$ and $\fdual F=F_+(\dual P)$.

    \emph{\itm1\,$\Leftrightarrow$\,\itm4:}
    If $F=F_+$, then $\relint(N_{\dual P}(\fdual F))=\cone(\relint(F))\subseteq\RR^d_+$ where the inclusion uses \itmto 1 2.
    It follows $\fdual F=F_+(\dual P)$ by \itmto 3 1 applied to $\fdual F\subseteq\dual P$.
    The other direction is proven~equi\-valently by swapping $P$ with $\dual P$. 

    \emph{\itm1\,$\Rightarrow$\,\itm3:}
    We have $\fdual F=F_+(\dual P)$ by \itmto 1 4 and $\relint(F)\subset\relint(N_{\dual P}(\fdual F))$ $\subseteq\RR^d_+$ by \itmto 1 3 applied to $\fdual F\subseteq \dual P$. $F=F_+$ follows from \itmto 2 1.
\end{proof}

\fi

\begin{proposition}\label{res:Fplus_boundary}
    If $F\subset F_+$ is a proper face, then $F=F_+\cap\RR^d_J$ for some $J\subset[d]$.
\end{proposition}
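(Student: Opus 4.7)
My plan is to prove the two containments $F \subseteq F_+\cap\RR^d_J$ and $F_+\cap\RR^d_J\subseteq F$, where $J := \supp(\tau)$ comes from writing $F = F_\tau$ as one of the $3^d$ \sfaces\ of $P$.

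I first pin down the sign vector. Since $F$ is a face of $F_+$ and $F_+$ is a face of $P$, $F$ is a face of $P$, so $F = F_\tau$ for some $\tau \in \PMZ^d$. By \cref{res:Fplus_properties} \itmeq12 we have $\relint(F_+) \subseteq \RR^d_+$, and taking closures (since $\RR^d_+$ is closed) gives $F \subseteq F_+ \subseteq \RR^d_+$. So every coordinate of $p_\tau \in F$ is non-negative, forcing $\tau \in \{0,+1\}^d$; properness of $F$ excludes $\tau = \sigma_+$, so $J := \supp(\tau) \subsetneq [d]$.

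Next I prove the first containment by showing $F \subseteq \RR^d_J$. For any $x \in F$, the standard relative-interior property yields $y := p_\tau + \delta(p_\tau - x) \in F$ for all sufficiently small $\delta > 0$. For $i \notin J$ we have $(p_\tau)_i = 0$, so $y_i = -\delta x_i$; combined with $y \in F \subseteq \RR^d_+$ this gives $x_i \leq 0$, and then $x \in F \subseteq \RR^d_+$ gives $x_i = 0$. Hence $F \subseteq F_+ \cap \RR^d_J$.

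For the reverse inclusion I consider $G := F_+ \cap \RR^d_J$. Since $\{x_i \geq 0\}$ is a valid inequality on $F_+ \subseteq \RR^d_+$, the hyperplane $\{x_i = 0\}$ cuts out a face of $F_+$ for each $i \notin J$. Thus $G$ is an intersection of faces of $F_+$ and so itself a face of $F_+$ (and of $P$). It is proper in $F_+$ because $(p_+)_i > 0$ for every $i$ shows $F_+ \not\subseteq \RR^d_J$ whenever $J \subsetneq [d]$. Writing $G = F_{\tau'}$ with $J' := \supp(\tau')$, applying the argument of the previous paragraph to the proper face $G$ yields $G \subseteq \RR^d_{J'}$. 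The chain $p_{\tau'} \in G \subseteq \RR^d_J$ gives $J' \subseteq J$, while $p_\tau \in F \subseteq G \subseteq \RR^d_{J'}$ together with $\supp(p_\tau) = J$ gives $J \subseteq J'$. Hence $J = J'$, $\tau = \tau'$, and $F = G = F_+ \cap \RR^d_J$.

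The main obstacle is recognizing that the geometric step (deducing that $F$ is confined to $\RR^d_J$) can be reapplied to the auxiliary face $G$ to pin down its sign vector. The rest is bookkeeping, once it is verified that $G$ is indeed a proper face of $F_+$.
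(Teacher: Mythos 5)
Your proof is correct, and it is more complete than the argument given in the paper. The paper's proof establishes the dichotomy ``either $F\subset\partial\RR^d_+$ or $\relint(F)\subset\RR^d_+$,'' rules out the second case (which would force $F=F_+$ by \cref{res:Fplus_properties}), and then tersely concludes ``$F$ is of the claimed form.'' In particular, the paper never explicitly establishes the containment $F_+\cap\RR^d_J\subseteq F$ --- it is left to the reader to see why $F$ cannot be a \emph{proper} face of $F_+\cap\RR^d_J$. Your argument addresses exactly this point: you identify $J=\supp(\tau)$ from $F=F_\tau$, show $F\subseteq\RR^d_J$ by the relative-interior perturbation argument, then introduce $G:=F_+\cap\RR^d_J$, confirm it is itself a proper face of $F_+$ (hence a special face $F_{\tau'}$), and re-apply the same perturbation argument to $G$ to pin down $\supp(\tau')=J$, forcing $G=F$. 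The double application is the right idea and makes the proof watertight; the bookkeeping on sign vectors ($\tau,\tau'\in\{0,+1\}^d$, both determined by their supports) is handled correctly.

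One minor presentational point: you read condition \labelstyle{ii} of \cref{res:Fplus_properties} as $\relint(F_+)\subseteq\RR^d_+$ with $\RR^d_+$ the \emph{closed} orthant (the literal definition given in the paper), while the paper's own proof of that proposition in fact establishes the stronger statement with the \emph{open} orthant, and its proof of the present proposition writes $\cl(\RR^d_+)$ and $\partial\RR^d_+$ as though $\RR^d_+$ were open. Fortunately, your argument only invokes the direction \labelstyle{i}$\Rightarrow$\labelstyle{ii}, which is valid under either reading, so nothing breaks; but be aware that the equivalence \labelstyle{ii}$\Rightarrow$\labelstyle{i} you cite would actually fail if $\RR^d_+$ were taken to be closed, so the open interpretation is the intended one.
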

\begin{proof}
    %
    Since $F\subset F_+\subset \cl(\RR^d_+)$ by \cref{res:Fplus_properties} \itmto 1 2, either holds $\smash{F\subset\partial \RR^d_+}$ or $\smash{\relint(F)\subset\RR^d_+}$.
    By \cref{res:Fplus_properties} \itmto 2 1, the latter is possible only if~$F=F_+$.
    Thus, $F\subset\partial \RR^d_+$, and $F$ is of the claimed form.
    %
\end{proof}

\begin{proposition}\label{res:Fplus_facet_simplex}
    If $F_+$ is a facet, then it must be a simplex of the form $$F_+=\conv\{\alpha_1 e_1,...,\alpha_d e_d\}\quad\text{with $\alpha_1,...,\alpha_d>0$}.$$
\end{proposition}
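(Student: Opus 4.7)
The strategy is to bound the total face count of $F_+$ from above using \cref{res:Fplus_boundary}, and then combine this with the classical lower bound on the face count of a $(d-1)$-polytope to force $F_+$ to be a $(d-1)$-simplex. The vertices of $F_+$ will then fall out immediately from the induced bijection between nonempty subsets of $[d]$ and nonempty faces of $F_+$.

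First, \cref{res:Fplus_boundary} asserts that every proper face of $F_+$ has the form $F_+\cap\RR^d_J$ for some $J\subsetneq[d]$. Since $0$ lies in the interior of $P$ and hence not on the facet $F_+$, the choice $J=\emptyset$ yields $F_+\cap\{0\}=\emptyset$. Together with $F_+$ itself (corresponding to $J=[d]$), this bounds the total number of nonempty faces of $F_+$ by $2^d-1$.

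Second, it is classical that every $k$-polytope has at least $2^{k+1}-1$ nonempty faces, with equality exactly for the $k$-simplex (a proof proceeds by induction on $k$: any facet contributes at least $2^k-1$ faces by the inductive hypothesis, and the faces containing a chosen vertex off that facet contribute at least $2^k$ more via the vertex figure, which is itself a $(k-1)$-polytope). Applied to the $(d-1)$-dimensional $F_+$, the two bounds coincide and so $F_+$ is a $(d-1)$-simplex with exactly $2^d-1$ nonempty faces. In particular the surjection $J\mapsto F_+\cap\RR^d_J$ from nonempty subsets of $[d]$ onto nonempty faces of $F_+$ must be a bijection.

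Under this bijection, vertices of $F_+$ correspond to singletons $J=\{i\}$, so each vertex of $F_+$ is a single point of $F_+\cap\RR e_i$. By \cref{res:Fplus_properties}~\itmto12 we have $F_+\subseteq\RR^d_+$, so this vertex has the form $\alpha_i e_i$ with $\alpha_i\ge0$; and since $0\notin F_+$ we in fact get $\alpha_i>0$. Therefore $F_+=\conv\{\alpha_1 e_1,\dots,\alpha_d e_d\}$ as claimed. I do not anticipate any serious obstacle; the only nontrivial external fact is the classical lower bound on the face count of a polytope, which could either be cited or briefly justified inline.
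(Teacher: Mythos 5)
Your proof is correct and follows essentially the same route as the paper's: bound the face count of $F_+$ above by $2^d-1$ via \cref{res:Fplus_boundary}, invoke the classical lower bound for $(d-1)$-polytopes to force $F_+$ to be a simplex, and read off the vertices from the resulting bijection $J\mapsto F_+\cap\RR^d_J$. The only difference is that you make the classical lower bound explicit, while the paper leaves it implicit in the sentence ``Since $F_+$ is of dimension $d-1$, it must be a simplex with exactly $2^d$ faces.''
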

\begin{proof}
    By \cref{res:Fplus_boundary}, the faces of $F_+$ are of the form $F_+\cap\RR^d_J$ with $J\subseteq[d]$.
    This can produce at most $2^d$ faces (including the empty face). 
    Since $F_+$ is of~dimen\-sion $d-1$, it must be a simplex with exactly $2^d$ faces.
    Counting the faces in each dimension shows that the $\delta$-dimensional faces of $F_+$ are of the form $F_+\cap\RR^d_J$ with $|J|=\delta+1$.
    In particular, the vertices of $F_+$ are in $\RR^d_J=\Span\{e_i\}$ where $J=\{i\}$, that is, they are of the form $\alpha_i e_i$. 
    The claim follows.
\end{proof}


\begin{proposition}
\label{res:one_facat_many_facts}
\label{res:one_vertex_many_vertices}
For $d\ge 3$ holds
\begin{myenumerate}
    \item 
    $F_+$ is a facet of $P$ if and only if $F_+(P_J)$ is a facet of $P_J$ for all $J\subset[d]$. 
    \item 
    $F_+$ is a vertex of $P$ if and only if $F_+(P_J)$ is a vertex of $P_J$ for all $J\subset[d]$.   
\end{myenumerate}
\end{proposition}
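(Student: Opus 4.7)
The plan is to prove part~\itm1 directly and then deduce part~\itm2 by polar duality, using $F_+(P)^\diamond = F_+(\dual P)$ (from \cref{res:Fplus_properties}) together with $(\dual P)_J = \dual{P_J}$ (from \cref{res:reminder_lab}). The logical order is forward~\itm1 $\to$ forward~\itm2 $\to$ backward~\itm1 $\to$ backward~\itm2, which introduces no circularity.

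The forward direction of~\itm1 is immediate from \cref{res:Fplus_facet_simplex}: writing $F_+(P) = \conv\{\alpha_j e_j : j \in [d]\}$, the intersection $F_+(P) \cap \RR^d_J = \conv\{\alpha_j e_j : j \in J\}$ is a face of $P_J$ of dimension $|J|-1$ whose relative interior lies in $(\RR^d_J)_+$; by \cref{res:Fplus_properties} it therefore equals $F_+(P_J)$ and is a facet of $P_J$. The forward direction of~\itm2 follows by dualizing.

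For the backward direction of~\itm1, set $b_j := \max\{x_j : x \in P\}$, the positive endpoint of $P_{\{j\}}$. The key step is to show that every vertex of $F_+(P)$ has the form $b_j e_j$. So let $v$ be such a vertex; then $v$ is a vertex of $P$, hence $v = p_\sigma$ with $F_\sigma = \{v\}$, and $\sigma \in \{0,+\}^d$ since $v \in \cl(\RR^d_+)$. For $J := \supp(\sigma)$ we have $p_\sigma = p_+(P_J) \in \relint F_+(P_J)$; on the other hand $v$ is also a vertex of $P_J$ (extreme points restrict to extreme points in subsets), which forces $\dim F_+(P_J) = 0$. For proper $J$ with $|J| \geq 2$ this contradicts the hypothesis $\dim F_+(P_J) = |J|-1$; and $J = [d]$ would make $F_+(P)$ itself a vertex, contradicting the hypothesis via the forward direction of~\itm2 (applied with $|J|=2$, this forces every $F_+(P_{\{i,j\}})$ to be a vertex rather than an edge of the quadrilateral $P_{\{i,j\}}$). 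Only $|J|=1$ remains, yielding $v = b_j e_j$.

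Consequently $F_+(P) = \conv\{b_j e_j : j \in K\}$ for $K := \{j : b_j e_j \in F_+(P)\}$, so $F_+(P) \subseteq \RR^d_K$. Because $\relint F_+(P)$ contains $p_+$, \cref{res:point_on_both_sides} prevents it from containing a point with a vanishing coordinate (otherwise some entry of $\inv p_+ \in N_P(F_+)$ would vanish, against $\sigma=\sigma_+$), so $\relint F_+(P)$ is strictly positive in every coordinate. This is compatible with $F_+(P) \subseteq \RR^d_K$ only for $K = [d]$, so $F_+(P) = \conv\{b_1 e_1, \ldots, b_d e_d\}$ is a $(d-1)$-simplex and therefore a facet of $P$. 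The backward direction of~\itm2 then follows by applying backward~\itm1 to $\dual P$ (again a proper locally anti-blocking minimizer, by the face-count preservation under polarity and \cref{res:reminder_lab}) and translating through duality. The most delicate point is the elimination of the case $J=[d]$ in the vertex analysis, which is precisely where forward~\itm2 is invoked.
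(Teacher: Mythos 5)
Your proof is correct and follows essentially the same route as the paper: forward~\itm1 via \cref{res:Fplus_facet_simplex} and \cref{res:Fplus_properties}, forward~\itm2 and backward~\itm2 by polar duality, and backward~\itm1 by analyzing the vertices of $F_+$ and showing each must lie on a coordinate axis while every axis must be hit. The only cosmetic differences are that the paper rules out the case ``$F_+$ is a vertex'' up front rather than folding it into the vertex-support case analysis, and argues $F_+\not\subset\{x_i=0\}$ directly from $\relint(F_+)\subset\RR^d_+$ rather than routing through \cref{res:point_on_both_sides}.
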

\begin{proof}

    \emph{\itm1\,$\Rightarrow$:}
    If $F_+$ is a facet, then it is of the form $\conv\{\alpha_1e_1,...,\alpha_d e_d\}$ by \cref{res:Fplus_facet_simplex}.
    Then for each $J\subseteq[d]$ the face $F_+\cap \RR^d_J=\conv\{\alpha_i e_i\mid i\in J\}$ is of dimension $|J|-1$, hence a facet of $P_J$ with $\relint(F_+\cap \RR^d_J)\subset\RR^d_{+J}:=\cone\{e_i\mid i\in J\}$.
    By \cref{res:Fplus_properties} \itmto 2 1 this facet is $F_+(P_J)$.

    \emph{\itm2\,$\Rightarrow$:}
    This is \itm1\,$\Rightarrow$ applied to $\dual P$ and using \cref{res:Fplus_properties} \itmeq 1 3.

    \emph{\itm1\,$\Leftarrow$:}
    Assume $F_+(P_J)$ is a facet for all $J\subset[d]$.
    Suppose first that $F_+$ is a vertex. 
    By \itm2\,$\Rightarrow$, $F_+(P_J)$ is a vertex of $P_J$, in particular for $J:=\{1,2\}\subset [d]$ (here we use $d\ge 3$). Since in dimension two vertices and facets are distinct, this contradicts $F_+(P_J)$ being a facet. Thus $F_+$ cannot be vertex.
    
    Let then $x$ be a vertex of $F_+$, which is a proper face. 
    By \cref{res:Fplus_boundary}, $x\in\RR^d_\sigma$ for some $\sigma\in\{0,+1\}^d\setminus\{\sigma_+\}$. Set $J:=\supp(\sigma)$. 
    Since $x$ is also a vertex of $P_J$, by \cref{res:Fplus_properties} \itmto 2 1 we have $F_+(P_J)=x$. 
    By assumption, $F_+(P_J)$ is a facet of $P_J$.
    Therefore, $P_J$ must be 1-dimensional and $J=\{i\}$.
    Thus, all vertices of $F_+$ are of the form $\alpha_i e_i$.
    Moreover, if $F_+$ does not have a vertex $\alpha_i e_i$, then $F_+\subset\{x_i=0\}$, contradicting $\relint(F_+)\subset\RR^d_+$ from \cref{res:Fplus_properties} \itmto 1 2. We conclude that $F_+=\conv\{\alpha_1 e_1,..., \alpha_d e_d\}$, and hence is a facet of $P$.

    \emph{\itm2\,$\Leftarrow$:}
    This is \itm1\,$\Leftarrow$ applied to $\dual P$ and using \cref{res:Fplus_properties} \itmeq 1 3.
\end{proof}

\begin{proposition}\label{res:vertex_if_complete}
    $F_+$ is a vertex if and only if $G_P$ is a complete graph.
\end{proposition}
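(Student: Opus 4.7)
The plan is to use induction on $d$, leveraging \cref{res:one_vertex_many_vertices} \itm2 (which relates the vertex property of $F_+(P)$ to that of the $F_+(P_J)$) together with \cref{res:GP_facts} \itm2 (which says that $G_{P_J}$ is just the induced subgraph of $G_P$ on $J$). The key bridge between the two objects under discussion is that for $|J|=2$, the polytope $P_J$ is a quadrangle, and $F_+(P_J)$ is a vertex precisely when $P_J$ is axis-aligned, which is exactly the condition for $J$ to be an edge of $G_P$.

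The forward direction is immediate: if $F_+$ is a vertex of $P$, then by \cref{res:one_vertex_many_vertices} \itm2, $F_+(P_J)$ is a vertex of $P_J$ for every $J\subsetneq[d]$, in particular for each pair $J=\{i,j\}$. Thus $P_{\{i,j\}}$ is an axis-aligned quadrangle, so $\{i,j\}$ is an edge of $G_P$. Hence $G_P$ is complete.

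For the converse, I would proceed by induction on $d$. The base cases $d=1$ and $d=2$ are handled directly: when $d=1$ the polytope $P$ is a line segment and $F_+$ is the positive vertex, while $G_P$ has a single vertex and is (vacuously) complete; when $d=2$, completeness of $G_P$ means that $P$ itself is axis-aligned, whose positive vertex is $F_+$ (using \cref{res:Fplus_properties} \itmto 2 1). For $d\ge 3$, suppose $G_P$ is complete. By \cref{res:GP_facts} \itm2, for every $J\subsetneq[d]$ the graph $G_{P_J}$ is the induced subgraph of $G_P$ on $J$, and hence is also complete. By the inductive hypothesis, $F_+(P_J)$ is a vertex of $P_J$ for every such $J$. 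Applying \cref{res:one_vertex_many_vertices} \itm2 (the $\Leftarrow$ direction) then yields that $F_+$ is a vertex of $P$.

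I do not expect a serious obstacle here: everything reduces to an induction whose inductive step is furnished by the previously established \cref{res:one_vertex_many_vertices} and the hereditary behavior of $G_P$ under taking coordinate sections. The only mildly delicate point is confirming the base cases (so that the application of \cref{res:one_vertex_many_vertices}, which requires $d\ge 3$, really only needs to be invoked inductively from $d\ge 3$ onward), but both $d=1$ and $d=2$ are straightforward from the characterization of $F_+$ in \cref{res:Fplus_properties}.
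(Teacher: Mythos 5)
Your proposal is correct and takes essentially the same approach as the paper: induction on $d$ with base cases $d\le 2$, and the inductive step supplied by \cref{res:one_vertex_many_vertices}~\itm2 together with the hereditary behavior of $G_P$ from \cref{res:GP_facts}~\itm2. The only difference is cosmetic: the paper runs both directions through a single induction, since \cref{res:one_vertex_many_vertices}~\itm2 is already an equivalence, whereas you extract the forward direction as a one-step (non-inductive) consequence of applying that proposition to pairs $J=\{i,j\}$; just note that this shortcut tacitly assumes $d\ge 3$, and the forward direction for $d\le 2$ must be confirmed directly (which is trivial and is in fact covered by your base cases).
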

\begin{proof}
    We proceed by induction on the dimension $d$ of $P$.
    If $d\in\{0,1\}$, then both $F_+$ is a vertex and $G_P$ is complete and the statement is satisfied.
    
    If $d=2$, then $F_+$ is a vertex if and only if $P$ is axis-aligned, which, by definition of $G_P$, is equivalent to $G_P$ being complete on two vertices.

    If $d\ge 3$, then by \cref{res:one_vertex_many_vertices} \itm2 $F_+$ is a vertex of $P$ if and only if $F_+(P_J)$~is~a vertex of $P_J$ for all $J\subset[d]$. By induction hypothesis, this is equivalent to all~$J\subset[d]$ inducing complete subgraphs of $G_P$. This in turn is equivalent to $G_P$ itself being complete.
\end{proof}


We say that $P$ is \Def{normalized} if $\pi_i(P)=\conv\{-e_i,e_i\}$ for all $i\in[d]$.
Note~that each proper locally anti-blocking polytope is halfspace-scaling equivalent to a normalized one.

\begin{proposition}\label{res:exact_vertex_location}
    If $P$ is normalized and $F_+$ is a vertex, then $F_+=e_1+\cdots+e_d$.
\end{proposition}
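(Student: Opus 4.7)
The plan is to use polar duality to turn the trivial upper bound $v_i \le 1$ (coming from normalization of $P$) into a matching lower bound $v_i \ge 1$, where $v := F_+$. Writing $v = p_+$, we note that $v \in \RR^d_{\sigma_+}$ has strictly positive coordinates, and normalization immediately gives $v_i \in \pi_i(P) = [-1,1]$, hence $v_i \le 1$.

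For the reverse inequality I would pass to $\dual P$, which is itself proper locally anti-blocking by \cref{res:reminder_lab}~\itm2. Since $F_+$ is a vertex of $P$, \cref{res:Fplus_properties}~\itmeq 1 3 yields $\fdual{F_+} = F_+(\dual P)$, and this dual face is a facet of $\dual P$. Applying \cref{res:Fplus_facet_simplex} to $\dual P$ then writes
\[
    F_+(\dual P) \;=\; \conv\{\beta_1 e_1, \ldots, \beta_d e_d\}
\]
for some $\beta_1,\ldots,\beta_d > 0$. Polar duality identifies this facet with $\{y \in \dual P : \<y,v\> = 1\}$, and testing against the vertex $\beta_i e_i$ forces $\beta_i v_i = 1$, so $\beta_i = 1/v_i$.

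It remains to show $\beta_i \le 1$ for each $i$, which amounts to the statement that $\dual P$ is normalized as well. This follows by combining \cref{res:reminder_lab}~\itm1 and~\itm2:
\[
    \pi_i(\dual P) \;=\; \dual P \cap \RR^d_{\{i\}} \;=\; P_{\{i\}}^\circ \cap \RR^d_{\{i\}} \;=\; [-e_i,e_i]^\circ \cap \RR^d_{\{i\}} \;=\; [-e_i,e_i],
\]
using the self-duality of the centrally symmetric segment inside its own $1$-dimensional affine hull. Since $\beta_i e_i \in \dual P$, this forces $\beta_i \le 1$, so $v_i = 1/\beta_i \ge 1$. Combined with $v_i \le 1$, this yields $v_i = 1$ for every $i$, and hence $F_+ = e_1 + \cdots + e_d$. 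I do not anticipate any serious obstacle; the main subtlety is just that normalization transfers cleanly from $P$ to $\dual P$, which \cref{res:reminder_lab} handles automatically.
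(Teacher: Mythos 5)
Your proof is correct and follows essentially the same route as the paper: pass to $\dual P$, use \cref{res:Fplus_properties} to identify $\fdual{F_+}=F_+(\dual P)$, apply \cref{res:Fplus_facet_simplex} to write it as $\conv\{\beta_1 e_1,\ldots,\beta_d e_d\}$, and use normalization (which you correctly transfer to $\dual P$ via \cref{res:reminder_lab}) to pin down the coefficients. The paper reads off the $\beta_i$ slightly more directly, whereas you derive $v_i=1$ by combining the two inequalities $v_i\le 1$ and $\beta_i\le 1$, but the underlying mechanism — dual facet is a coordinate simplex, normalization fixes the scale — is identical.
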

\begin{proof}
    From \cref{res:Fplus_properties} \itmto 1 3 we know that $F_+(\dual P)$ is a facet of $\dual P$, which by \cref{res:Fplus_facet_simplex} is of the form $\conv\{\alpha_1 e_1,...,\alpha_d e_d\}$, where $\alpha_i e_i$ are vertices of $P$.
    Since $P$ is~normalized, we have $\alpha_i=1$, and $F_+(\dual P)=\conv\{e_1,...,e_d\}$. The dual face to this facet is the vertex $e_1+\cdots+e_d$, which is $F_+$ by \cref{res:Fplus_properties} \itmto 3 1.
\end{proof}

\begin{lemma}\label{res:unique_reconstruction}
    $P$ is halfspace-scaling equivalent to
    %
    \begin{equation*}\label{eq:determinde_by_cliques}
    C(G_P):=\conv\Big\{\sum_{i\in J} \pm e_i \mid \text{$J\subseteq[d]$ induces a complete subgraph of $G_P$}\Big\}.
    \end{equation*}
\end{lemma}

Note that $C(G)$ is exactly the \emph{stable set polytope} $\mathsf UP_{\bar G}$ of the complement graph $\overline G$ 
as defined in \cite{KohlOlsenSanyal}.


\begin{proof}[Proof of \cref{res:unique_reconstruction}]
W.l.o.g.\ assume that $P$ is normalized.

Fix a vertex $x\in P$, and w.l.o.g.\ assume $\sign(x)\in\{0,+1\}^d$. Then $x$ is also~a~ver\-tex of $P_J$ where $J:=\supp(x)$ and we have $F_+(P_J)=x$ by \cref{res:Fplus_properties} \itmto 2 1.
Hence $J$ induces a complete graph of $G_P$ by \cref{res:vertex_if_complete}, and we have
$x=\sum_{i\in J} e_i$ by \cref{res:exact_vertex_location}.
We conclude $P\subseteq C(G_P)$.

Conversely, assume that $J\subseteq[d]$ induces a complete subgraph of $G_P$. Then~by \cref{res:vertex_if_complete} and \cref{res:exact_vertex_location} we have that $F_+(P_J)=\sum_{i\in J} e_i$ is a vertex of~$P_J$, and hence is contained in $P$.
Clearly, all arguments apply analogously to orthants other than $\RR^d_+$, and so $\sum_{i\in J}\pm e_i$ is a vertex of $P_J$ for each choice of signs.
Thus, we conclude $C(G_P)\subseteq P$.
\end{proof}

We have now reduced the task of classifying the minimizers to a combinatorial investigation of $G_P$.

\subsection{Characterizing Hanner polytopes}

A graph is a \Def{cograph} if
\begin{myenumerate}
    \item it is a single vertex.
    \item it is the complement of a cograph.
    \item it is the disjoint union of two cographs.
\end{myenumerate}

\begin{lemma}\label{res:charcterizing_Hanner}
    $P$ is halfspace-scaling equivalent to a Hanner polytope if and only if $G_P$ is a cograph.
\end{lemma}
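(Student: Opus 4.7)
The plan is to proceed by induction on $d$, combining the reconstruction lemma \cref{res:unique_reconstruction} with the standard recursive characterization of cographs: a graph with at least two vertices is a cograph if and only if either it or its complement is disconnected, with each component being itself a cograph (this follows from the definition given, since complementation at the top level either leaves the last operation as a disjoint union or creates one at the level below). The base case $d=1$ is immediate: a one-vertex graph is a cograph, and any $1$-dimensional proper locally anti-blocking polytope is a segment containing $0$ in its interior, hence equivalent to $[-1,+1]$, a Hanner polytope.

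For the forward direction, suppose $P$ is equivalent to a Hanner polytope~$H$. Since halfspace scalings preserve the combinatorial type of $2$-dimensional coordinate sections, we have $G_P = G_H$, so it suffices to show $G_H$ is a cograph. If $H = H_1 \oplus H_2$, then \cref{res:GP_facts}\,\itm3 gives $G_H = G_{H_1} \uplus G_{H_2}$, which is a cograph by induction. If $H = H_1 \times H_2$, then $\dual H = \dual H_1 \oplus \dual H_2$ is a free sum of (lower-dimensional) Hanner polytopes, so by the previous case $G_{\dual H}$ is a cograph; applying \cref{res:GP_facts}\,\itm1, $G_H = \overline{G_{\dual H}}$ is a cograph as well.

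For the backward direction, suppose $G_P$ is a cograph. If $G_P$ is disconnected, write $G_P = G_1 \uplus G_2$ with $G_i$ on vertex set $I_i$, so that $I_1 \cupdot I_2 = [d]$. The crucial observation is that since there are no edges between $I_1$ and $I_2$, a subset $J \subseteq [d]$ induces a complete subgraph of $G_P$ if and only if $J \subseteq I_1$ or $J \subseteq I_2$. Therefore
\[
C(G_P) \;=\; \conv\bigl(C(G_1) \cup C(G_2)\bigr) \;=\; C(G_1) \oplus C(G_2),
\]
since $C(G_1) \subseteq \RR^d_{I_1}$ and $C(G_2) \subseteq \RR^d_{I_2}$ meet only at the origin. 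By \cref{res:unique_reconstruction} applied to the lower-dimensional $P_{I_i}$ together with \cref{res:GP_facts}\,\itm2, we have $C(G_i) \sim P_{I_i}$ and $G_{P_{I_i}} = G_i$ is a cograph (as an induced subgraph of $G_P$). The induction hypothesis makes each $P_{I_i}$ equivalent to a Hanner polytope, so $P \sim C(G_P) \sim P_{I_1} \oplus P_{I_2}$ is equivalent to a Hanner polytope. If instead $\overline{G_P}$ is disconnected, then $G_{\dual P} = \overline{G_P}$ is disconnected; by the case just handled, $\dual P$ is equivalent to a Hanner polytope, and dualizing, so is $P$ (using that the Hanner class is closed under polarity and that duality converts halfspace scaling into a halfspace scaling of $\dual P$).

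The main obstacle is the identification $C(G_1 \uplus G_2) = C(G_1) \oplus C(G_2)$, which requires a careful bookkeeping of which sign patterns $\sum_{i\in J} \pm e_i$ occur as vertices, and the observation that mixed-support vertices are absent precisely because no edge of $G_P$ crosses the partition. Everything else is routine induction once the cograph recursion and \cref{res:GP_facts} are in place.
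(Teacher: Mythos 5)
Your proof is correct and takes essentially the same approach as the paper: the forward direction runs an induction over the Hanner construction via \cref{res:GP_facts}, and the backward direction combines the reconstruction \cref{res:unique_reconstruction} with the fact that every cograph is realised by a Hanner polytope. You spell out this latter step (which the paper dispatches with a single clause, ``since clearly all cographs can be obtained in this way'') via an explicit induction on the cograph decomposition and the identity $C(G_1 \uplus G_2) = C(G_1) \oplus C(G_2)$.
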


\begin{proof}
    Assume first that $P\subseteq\RR^d$ is a Hanner polytope.
    We show that $G_P$ is a~co\-graph by induction on the recursive construction of $P$.
    If $d=1$ then $G_P$ is a single vertex, hence a cograph.
    If $d\ge 2$ then $P$ has been constructed in one of two ways:
    \begin{myenumerate}
        \item $P$ is the polar dual of a previously constructed Hanner polytope $Q$. Hence $G_{Q}$ is a cograph by induction hypothesis. Then $G_P$ is the complement of $G_{Q}$ (\cf\ \cref{res:GP_facts} \itm1), and therefore a cograph itself.
        \item $P=P_1\oplus P_2$ with previously constructed Hanner polytopes $P_i$. 
        The $G_{P_i}$ are cographs by induction hypothesis.
        In conclusion, $G_P=G_{P_1}\uplus G_{P_2}$,\nls (\cf\ \cref{res:GP_facts} \itm3) and is a cograph itself.
    \end{myenumerate}
    
    This shows that $G_P$ is a cograph whenever $P$ is a Hanner polytope.
    Conversely, since clearly all cographs can be obtained in this way, and since the polytope $P$ is uniquely determined from $G_P$ via \cref{res:unique_reconstruction} (up to halfspace-scaling equivalence), $P$ is halfspace-scaling equivalent to a Hanner polytope whenever $G_P$ is a cograph.
\end{proof}

It is known that cographs can be characterized in the following way:

\begin{lemma}[\!{\cite[Theorem 2]{corneil1981complement}}]
    \label{res:cograph_P3}
    A graph $G$ is a cograph if and only if it contains no path of length three (on four vertices) as an induced subgraph.
\end{lemma}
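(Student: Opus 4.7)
\medskip

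The plan is to prove both implications by induction on $|V(G)|$, with the real content isolated in a separate lemma: every $P_4$-free graph on at least two vertices is either disconnected or has a disconnected complement. Once this lemma is granted, both directions of the biconditional are short.

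For the forward direction I would use structural induction on the cograph construction. The single vertex has no induced $P_4$. For the complement step, the key fact is that $P_4$ is self-complementary, so $\bar G$ is $P_4$-free whenever $G$ is. For the disjoint union step, $P_4$ is connected, so any induced $P_4$ in $G_1 \uplus G_2$ must lie in one $G_i$, and the inductive hypothesis then applies.

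For the backward direction, granted the key lemma, I induct on $|V(G)|$. If $G$ is disconnected, each component is $P_4$-free and hence a cograph by the inductive hypothesis, so $G$ itself is a cograph as a disjoint union. If instead $\bar G$ is disconnected, the same reasoning applies to $\bar G$ (which is $P_4$-free since $P_4$ is self-complementary) to see that $\bar G$ is a cograph; then $G$ is the complement of a cograph, hence a cograph.

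The main obstacle is proving the key lemma, which I would also establish by induction on $|V(G)|$, the base case $|V(G)|=2$ being immediate. For the inductive step, pick any $v \in V(G)$ and apply the lemma to the $P_4$-free graph $G - v$; passing to the complement if necessary, assume $G - v$ is disconnected with components $C_1,\ldots,C_k$ for some $k\ge 2$. If $v$ is universal (respectively isolated) in $G$, then $\bar G$ (respectively $G$) is disconnected. If $v$ has neighbors in only one component $C_i$, then the remaining $C_j$ are still components of $G$, so $G$ is disconnected. In the leftover case, $v$ has both a non-neighbor and neighbors in at least two components, so some $C_i$ contains both a $v$-neighbor and a $v$-non-neighbor. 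Walking a path inside $C_i$ between them yields adjacent vertices $u', w' \in C_i$ with $w' \in N(v)$ and $u' \notin N(v)$; choosing any neighbor $w_2$ of $v$ in a different component $C_j$ then makes $w_2, v, w', u'$ an induced $P_4$, since $w_2$ has no edges to $C_i$ and $v$ is not adjacent to $u'$. This contradicts the $P_4$-freeness of $G$ and rules out the leftover case, completing the lemma.

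\medskip
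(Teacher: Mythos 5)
The paper cites this characterization from Corneil--Lerchs--Stewart Burlingham (1981) without reproducing a proof, so there is no internal argument to compare against; you are supplying one from scratch. Your overall architecture is sound: self-complementarity of $P_4$ handles the forward direction, the key lemma (``every $P_4$-free graph on at least two vertices is disconnected or has disconnected complement'') drives the backward direction, and the induced $P_4$ you exhibit, $w_2, v, w', u'$, really is induced for the reasons you give (the only possible chords $w_2w'$, $w_2u'$, $vu'$ are absent because $w_2$ lives in a different component of $G-v$ and $u'$ is a $v$-non-neighbor).

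There is, however, a gap in the case analysis of the key lemma. Your cases are: $v$ universal, $v$ isolated, $v$ with neighbors in exactly one component $C_i$, and a leftover case in which you assert that ``some $C_i$ contains both a $v$-neighbor and a $v$-non-neighbor.'' That assertion does not follow from the leftover hypotheses. For instance, with $k=3$ components, $v$ could be adjacent to every vertex of $C_1$ and $C_2$ and to no vertex of $C_3$: then $v$ has a non-neighbor and has neighbors in at least two components, so you are in the leftover case, yet no single $C_i$ contains both a neighbor and a non-neighbor. (The conclusion of the lemma is still true there, since $C_3$ remains a component of $G$, but your argument, which proceeds by exhibiting a $P_4$, does not see this.) The fix is small: replace the third case by ``$v$ has no neighbor in some component $C_j$'' -- which already forces $G$ to be disconnected, as $C_j$ is then a component of $G$ -- and the genuine leftover case becomes ``$v$ has a neighbor in every component and is not universal.'' Under that hypothesis the component containing any non-neighbor of $v$ also contains a neighbor of $v$, and your path-walking construction of the induced $P_4$ goes through.
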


Let $\Pi_3$ be the path of length three with vertices $\{1,2,3,4\}$ and edges $\{12,23,34\}$.

\begin{lemma}\label{res:P3_not_minimal}
    $C(\Pi_3)$ is not a minimizer.
\end{lemma}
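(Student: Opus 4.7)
The plan is to exhibit a single face $T \subseteq C(\Pi_3)$ that is not \ansface. By \cref{res:3d_special_points} there are exactly $3^4 = 81$ \spoints\ in $C(\Pi_3)$, and by \cref{res:one_per_face} they lie in $81$ pairwise distinct face interiors; hence the existence of any further face containing no \spoint\ in its relative interior forces $s(C(\Pi_3)) \geq 82 > 3^4$, ruling out $C(\Pi_3)$ as a minimizer.

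The candidate face will be the triangle
$$
T \defeq \conv\{e_1 + e_2,\, e_2 + e_3,\, e_3 + e_4\}.
$$
I first verify that $C(\Pi_3)$ has exactly the twelve vertices $\pm e_i \pm e_{i+1}$ ($i \in \{1,2,3\}$): the other candidate lattice points $0$ and $\pm e_i$ are non-extreme, since \eg\ $e_1 = \tfrac{1}{2}(e_1 + e_2) + \tfrac{1}{2}(e_1 - e_2)$. Evaluating $\sum_i x_i$ on these twelve vertices attains the maximum value $2$ precisely at $e_1 + e_2$, $e_2 + e_3$, $e_3 + e_4$, so $T$ is a $2$-face of $C(\Pi_3)$. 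A short calculation then yields
$$
N_{C(\Pi_3)}(T) \;=\; \{(a,b,a,b) \mid a, b \geq 0\},
$$
by combining the tie conditions $n_1 + n_2 = n_2 + n_3 = n_3 + n_4$ (which force $n_1 = n_3$ and $n_2 = n_4$) with the dominance inequalities against the remaining nine vertices (which each reduce, after substituting $n_3 = n_1$, $n_4 = n_2$, to $n_1 \geq 0$ or $n_2 \geq 0$).

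To finish, I show that no \spoint\ lies in $\relint(T)$. Since $\relint(T) \subseteq \RR^4_\sigma$ for $\sigma = (+1,+1,+1,+1)$, the only candidate is $p_\sigma$ for this particular $\sigma$. Parametrising $p = \alpha(e_1+e_2) + \beta(e_2+e_3) + \gamma(e_3+e_4) = (\alpha, \alpha + \beta, \beta + \gamma, \gamma)$ with $\alpha, \beta, \gamma > 0$, the \spoint\ condition $\inv{p} \in N_{C(\Pi_3)}(T)$ demands $\alpha = \beta + \gamma$ and $\alpha + \beta = \gamma$; summing these equations yields $\alpha = \gamma$ and hence $\beta = 0$, a contradiction. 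Thus $T$ contains no \spoint\ in its relative interior, and $s(C(\Pi_3)) > 3^4$ as desired. The only nontrivial step is the normal-cone computation; everything else is direct bookkeeping.
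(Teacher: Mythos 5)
Your proof is correct, and it identifies the same triangle that the paper uses, but reaches the contradiction by a genuinely different route. The paper's argument assumes $C(\Pi_3)$ is a minimizer so that the Section~\ref{sec:minimizers} machinery applies: by \cref{res:Fplus_properties}~\itmto 2 1 the triangle must be the positive face $F_+$, and then \cref{res:Fplus_boundary} forces every proper face of $F_+$ to be a coordinate section $F_+\cap\RR^d_J$, which fails for the edge $\conv\{e_1+e_2,\,e_3+e_4\}$ (its midpoint lies in $\RR^4_{>0}$). Your argument instead goes back to first principles from Section~\ref{sec:inequality}: you compute $N_{C(\Pi_3)}(T)=\{(a,b,a,b):a,b\ge 0\}$ explicitly, parametrize $\relint(T)$, and show the special-point condition $\inv p\in N_{C(\Pi_3)}(T)$ has no solution with $\alpha,\beta,\gamma>0$, so $T$ carries no special point and $s(C(\Pi_3))>3^4$ directly via \cref{res:3d_special_points} and \cref{res:one_per_face}. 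What you gain is self-containedness --- your proof needs none of the $F_+$-propositions (which rely on the standing minimizer assumption) --- at the cost of an explicit normal-cone computation; what the paper's version gains is a one-line contradiction once those structural propositions are in place, which is natural since they are already needed elsewhere in Section~\ref{sec:minimizers}. Both arguments are clean and of comparable length; yours could serve as a drop-in replacement that a reader can check without having absorbed \cref{res:Fplus_properties} and \cref{res:Fplus_boundary}.
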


\begin{proof}
    Observe that the face $F\subset C(\Pi_3)$ with normal vector $(1,1,1,1)$ is spanned by the vertices $e_1+e_2,e_2+e_3$ and $e_3+e_4$. In particular, $\relint(F)\subset\RR^d_+$.
    Thus, if $C(\Pi_3)$ were a minimizer, then $F=F_+$ using \cref{res:Fplus_properties} \itmto 2 1. 
    But~the edge $\conv\{e_1+e_2,e_3+e_4\}$ of $F$ is not of the form $F_+\cap\RR^d_J$, contradicting \cref{res:Fplus_boundary}.
\end{proof}

Alternatively, that $C(\Pi_3)$ is not a minimizer can be checked directly by counting its faces (\eg\ using Sage).
Its 12 vertices have coordinates
$$
(\pm1,\pm1,0,0),\quad
(0,\pm1,\pm1,0),\quad
(0,0,\pm1,\pm1)
$$
(corresponding to the cliques of $\Pi_3$).
We find that $C(\Pi_3)$ has $f$-vector $(12,36,36,12)$ and exactly $97>3^4$ non-empty faces.

With this in place we can prove the minimizer part of \cref{thm:main_lab}: if $P\subset\RR^d$~is proper locally anti-blocking with $s(P)=3^d$, then $P$ is a generalized Hanner poly\-tope:


\begin{proof}[Proof of \cref{thm:main_lab} (the minimizers)]
    By \cref{res:3_to_J} $P\cap\RR^d_J$ is a minimizer for all $J\subseteq[d]$. By \cref{res:P3_not_minimal} $P\cap\RR^d_J$ cannot be (equivalent to) $C(\Pi_3)$. Thus, by \cref{res:GP_facts} \itm2, no $J\subseteq[d]$ induces a $\Pi_3$-subgraph of $G_P$, and  $G_P$ is therefore a cograph by \cref{res:cograph_P3}. In conclusion, $P$ is equivalent to a Hanner polytope by \cref{res:charcterizing_Hanner}.
\end{proof}

\tempnewpage

\section{A combinatorial proof of \cref{cor:main_unconditional}}
\label{sec:uncondition}

In this section we present a second independent proof of the $3^d$ conjecture specifically for unconditional polytopes that is more combinatorial in nature. 

\iftrue 

The following proof of \cref{cor:main_unconditional} is based on the fact that the face lattice~$\F(P)$ of a polytope is \emph{relatively complemented}.
A bounded lattice $\mathcal L$ with least element $\hat 0$ and greatest element~$\hat 1$ is \Def{complemented}, if each $F\in\mathcal L$ has a (not necessarily~unique) \Def{complement} $G\in\mathcal L$, \ie\
$$F\land G= \hat 0 \wideand F\lor G=\hat 1,$$
where $\land$ and $\lor$ denote the meet and join in $\mathcal L$.
It is \Def{relatively complemented} if every interval $[F,G]$ is complemented.

For the face lattice $\F(P)$ we have $\hat 0=\eset$ and $\hat 1=P$. 
It is well-known that face lattices are relatively complemented, but we include a short argument:

\begin{lemma}\label{res:relatively_complemented}
    The face lattice $\F(P)$ of a polytope is relatively complemented
\end{lemma}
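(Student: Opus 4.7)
The plan is to proceed in two stages. First I would reduce relative complementedness to plain complementedness, using the standard fact that every interval $[F,G]$ in $\F(P)$ is itself isomorphic to the face lattice of a polytope (namely, the iterated quotient $G/F$, i.e.\ the link of $F$ in $G$). Thus it suffices to show that for every polytope $Q$ and every proper face $F\subsetneq Q$ there exists a face $G$ of $Q$ with $F\cap G = \eset$ and $F\vee G = Q$.

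For this second step I plan to use a geometric \enquote{opposite-face} construction: fix $p\in\relint(F)$ and a suitably chosen interior point $q\in\Int(Q)$, and let $r$ be the point where the ray from $p$ through $q$ leaves $Q$. Taking $G$ to be the face whose relative interior contains $r$, the condition $q\in(p,r)\cap\Int(Q)$ forces any face of $Q$ containing both $p$ and $r$ to contain $q$, and thus to equal $Q$; this automatically yields $F\vee G = Q$. For the disjointness $F\cap G = \eset$ I would choose $q$ generically, arranging that the open segment $(p,r)$ does not meet $\aff(F)$, so that $G\not\subseteq\aff(F)$ and the candidate complement avoids $F$.

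The hard part will be exactly this disjointness $F\cap G = \eset$. Small examples such as a triangular side face $F$ of a square pyramid show that naively choosing $q$ (say, the centroid of $Q$) can produce a $G$ that still shares a vertex with $F$. I expect to overcome this either by a careful parameter-counting genericity argument on $q\in\Int(Q)$, or by appealing to polar duality so that the two conditions $F\cap G = \eset$ and $F\vee G = Q$ trade places---one can then work with whichever of the two conditions is more convenient to arrange in the given polytope.
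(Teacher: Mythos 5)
Your reduction to plain complementedness is the same as the paper's: intervals of face lattices are again face lattices, so it suffices to find a complement for each face. The ray construction is a nice idea, and you correctly observe that the join condition $F\vee G = Q$ comes for free: if some proper face $K$ contained both $F$ and $G$, then $p,r\in K$ would force $q\in(p,r)\subset K\cap\Int(Q)$, impossible.

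The gap is exactly the one you flag, and your two proposed repairs do not close it. \emph{Genericity:} for almost every $q\in\Int(Q)$ the exit point $r$ lands in the relative interior of a \emph{facet}, and your own square-pyramid example shows that a facet can fail to be disjoint from $F$; worse, in that example \emph{no} facet is disjoint from the side triangle $F$ (every facet contains either the apex or a base edge of $F$), so no generic $q$ can ever work. The complements of $F$ there are the opposite base edge and its two endpoints, all of dimension $\le 1$; the set of $q$'s aiming the ray at one of these is a union of pieces of dimension $\le \dim G_0 +1 < d$, hence measure zero. To know this measure-zero set is nonempty you essentially need to already know a complement $G_0$ exists, which is what you are trying to prove. \emph{Polar duality:} dualizing swaps $F\wedge G=\eset$ with $F^\diamond\vee G^\diamond = Q^\circ$, so the ray construction in $Q^\circ$ produces a candidate satisfying the \emph{other} condition for free and leaves you stuck on the dual of the join condition; you get two candidate complements, one satisfying each condition, with no mechanism to merge them.

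The paper circumvents the problem by a different reduction: project $P$ along $\aff(F)$ so that $F$ collapses to a vertex $v$ of the image; a complement of a vertex is simply any facet not containing it (which always exists); the preimage of such a facet is then a complement of $F$. Note that your ray construction \emph{does} succeed unconditionally in the special case where $F$ is a vertex $v$, because the exit face $G$ then cannot contain $v$ (otherwise $[v,r]\subset G$ would meet $\Int Q$). So the missing ingredient in your proof is precisely the reduction to the vertex case that the paper performs via projection; adding that step would make your argument go through.
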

\begin{proof}
Since intervals in face lattices are face lattices themselves, it suffices to show that $\F(P)$ is complemented. 

Clearly, $\eset$ and $P$ are complements of each other. 
If $F$ is a non-empty proper face, then project $P$ ``along'' $F$, that is, so that $F$ becomes a vertex $v$ of~the projection. 
In the projection the vertex $v$ has a complement, namely, any facet that does not contain the vertex.
The preimage of this facet under the projection is a complement of the original face $F$.
\end{proof}

\else

The following proof of \cref{cor:main_unconditional} is based on the fact that the face lattice~$\F(P)$ of a polytope is always \Def{complemented}, that is, every face $F\in\mathcal F(P)$ has a (not~necessarily unique) \Def{complement} $G\in\mathcal F(P)$, \ie\
$$F\land G=\eset \wideand F\lor G=P.$$
where $\land$ and $\lor$ denote the meet and join in $\F(P)$.
This is well-known, but we give a short argument: $\eset$ and $P$ are clearly complements of each other. 
If $F$ is non-empty and proper, then project $P$ ``along'' $F$, that is, so that $F$ becomes a vertex $v$ of~the projection. 
In the projection the vertex $v$ has a complement, namely, any facet that does not contain the vertex.
The preimage of this facet under the projection is a complement of the original face $F$.

\fi


Note also that in a centrally-symmetric polytope for each non-empty proper face $F\in\F(P)$ the face $-F$ is a complement.


\begin{proof}[Proof of \cref{cor:main_unconditional} (the inequality)]
    We proceed by induction on the dimension~$d$~of $P$. The cases $d\in\{0,1\}$ are trivial.
    For the induction step fix a coordinate~hyper\-plane $H$ and a proper face $F\in\F(P)$ supported by a translate of $H$.
    The following three sets $S_+,S_0,S_-\subset\F(P)$ are disjoint:
    \begin{align*}
        S_+ &:=\{\text{faces of $P$ that intersect $F$ but not $-F$}\},
        \\
        \mathrlap{S_0}\phantom{S_+} &:=\{\text{faces of $P$ that intersect both of $F$ and $-F$, or neither}\},
        \\
        S_- &:=\{\text{faces of $P$ that intersect $-F$ but not $F$}\}.
    \end{align*}
    We show that each set contains $\ge3^{d-1}$ faces, which proves the claim.

    \ul{Case $S_0$}: the faces of $P\cap H$ correspond to faces of $P$ that are $H$-symmetric (\ie\ invariant under reflection in $H$). Each $H$-symmetric face is necessarily in $S_0$. Since $P\cap H$ is unconditional of dimension $d-1$, we have $|S_0|\ge s(P\cap H)\ge 3^{d-1}$~by~in\-duction hypothesis.

    \ul{Case $S_+$/$S_-$}: by symmetry it suffices to consider $S_+$. 
    For each $\check F\in(\eset, F]$ and $\hat F\in[F,P)$ the face $ F$ has a complement $G(\check F,\hat F)$ in $[\check F,\hat F]$ (using \cref{res:relatively_complemented}), that is, $ G(\check F,\hat F)\land F=\check F$ and $ G(\check F,\hat F)\lor F=\hat F$.
    In particular, the $ G(\check F,\hat F)$ are distinct since they~differ~in~meet and join with $ F$. Below we will show that $ G(\check F,\hat F)\in S_+$. 
    Assuming this claim,\nls and observing that both $F$ and the dual face $ F^\diamond\in\F(P^\circ)$ are unconditional, the induction hypothesis applies and it already follows
    \begin{align*}
        |S_+|&\ge |\{ G(\check F,\hat F): \eset \subset\check F\subseteq F\subseteq\hat F\subset P\}| 
        \\&= s\big((\eset, F]\big)\cdot s\big([ F,P)\big) = s( F)\cdot s(F^\diamond) \ge 3^{\dim( F)}\cdot 3^{d-1-\dim( F)} =3^{d-1},
    \end{align*}
    
    It remains to show that indeed $ G(\check F,\hat F)\in S_+$.
    By construction $ G(\check F,\hat F)$ intersects $ F$ in $\check F\not=\eset$. It remains to show that $ G(\check F,\hat F)$ does \emph{not} intersect $- F$.
    But indeed, if $G':= G(\check F,\hat F)\land (- F)\not=\eset$, then both $ G'\subseteq G(\check F,\hat F)\subseteq\hat F$ and $ G'\subseteq- F\Longrightarrow- G'\subseteq F\subseteq\hat F$, hence $ G'\lor(- G')\subseteq \hat F\subset P$, in contradiction to the fact that $ G'$ and $- G'$ are complements in $P$ and $ G'\lor(- G')=P$.
\end{proof}


\section{Open questions}


Below we collected some remaining questions and related conjectures.

\subsection{Combinatorially locally anti-blocking}

Since Kalai's $3^d$ conjecture makes a statement only about the polytope's combinatorics, our results apply to not only locally anti-blocking polytope, but to any polytope that is combinatorially equivalent to a locally anti-blocking polytope.
This raises~the question: are there~centrally-sym\-metric polytopes that are not combinatorially equivalent to a locally anti-block\-ing polytope?
While the answer is most certainly \emph{yes}, we are not aware of any~example.


\begin{challenge}
    Construct (many) centrally-symmetric polytopes that are \ul{not} combinatorially equivalent to a locally anti-blocking polytope.
\end{challenge}

A hope is that by understanding how to construct such polytopes, one will also understand why they might need to have many faces.

\subsection{The flag conjecture}

The following conjecture was brought to our attention by Matthias Schymura and might be amenable to techniques similar to the ones we employed in this article.
Recall that a \emph{flag} of $P$ is an inclusion chain $\eset=F_{-1}\subseteq F_0$ $\subseteq F_1\subseteq \cdots\subseteq F_d=P$, where $F_i$ is an $i$-dimensional face of $P$. 

\begin{conjecture}
\label{conj:flag}
    Every $d$-dimensional centrally-symmetric polytope $P$ has at least as many flags as the $d$-dimensional cube, \ie\
    $$\text{\normalfont \#flags}(P) \ \ge \ \text{\normalfont \#flags}(C_d) \ = \ 2^d d!\,.$$
\end{conjecture}

Like Kalai's $3^d$ conjecture, \cref{conj:flag} is widely open, and it appers reasonable to first attack it for the class of locally anti-blocking polytopes.

\tempnewpage

\par\bigskip
\parindent 0pt
\textbf{Acknowledgements.} 
We thank Matthias Schymura for his valuable feedback on the article. 
We also thank the reviewer for helpful comments.
The second author gratefully acknowledges the support by the funding by the British Engineering and Physical
Sciences Research Council [EP/V009044/1].


 \bibliographystyle{abbrv}
 \bibliography{literature}

\end{document}